\newcommand{\excise}[1]{}
\newcommand{\Hom}{\operatorname{Hom}}
\newcommand{\Ind}{\operatorname{Ind}}
\newcommand{\End}{\operatorname{End}}
\newcommand{\op}{\operatorname}
\newcommand{\Gr}{\operatorname{Gr}}
\newcommand\C{\mathbb C}
\newcommand\Z{\mathbb Z}
\newcommand\N{\mathbb N}
\newcommand\Q{\mathbb Q}
\newcommand\mZ{\mathbb Z}
\newcommand\la{\lambda}
\newtheorem{thm}{Theorem}[section]
\newtheorem{prop}[thm]{Proposition}
\newtheorem{cor}[thm]{Corollary}
\newtheorem{lemma}[thm]{Lemma}
\theoremstyle{definition}
\numberwithin{equation}{section}
\title{Fusion rings for quantum groups}
\author{Henning Haahr Andersen}
\author{Catharina Stroppel}
\address{HHA: Center for Quantum Geometry of Moduli Spaces, Aarhus
University, Building 530, Ny Munkegade, 8000  Aarhus C, DENMARK}
\address{CS: Department of Mathematics, University of Bonn, Endenicher Allee 60, 53115 Bonn, GERMANY}
\thanks
{HHA was supported by the Danish National Research Foundation center of Excellence, Center for Quantum Geometry of Moduli Spaces (QGM); and CS by a visiting professorship at Chicago university. We thank Troels Bak Andersen and Stephen Griffeth for comments on a preliminary version of the paper.}
\begin{document}

\begin{abstract}
We study the fusion rings of tilting modules for a quantum group at a
root of unity modulo the tensor ideal of negligible tilting modules.
We identify them in type $A$ with the combinatorial rings from
\cite{KorffStroppel} and give a similar description of the
$\mathfrak{sp}_{2n}$-fusion ring in terms of non-commutative symmetric
functions. Moreover we give a presentation of all fusion rings in
classical types as quotients of polynomial rings. Finally we also compute the fusion rings for type $G_2$.
\end{abstract}

\maketitle
\tableofcontents

\section{Introduction}
Fusion rings associated with semisimple Lie algebras have been studied from different perspectives, see e.g. \cite{BR}, \cite{BK}, \cite{Dou}, \cite{GM}, \cite{Gepner}, \cite{KorffStroppel}, \cite{W}. In this paper
we approach them via tilting modules for quantum groups $U_q(\mathfrak g)$ at complex roots of unity and relate them to previously obtained descriptions obtained in loc. cit. In particular, we give an alternative way of proving the combinatorial fusion rule for type $A$ established by C. Korff  and the second author in \cite{KorffStroppel} and  also obtain an analogous formula in the type $C$ case.

Our approach gives for all types an easy method to realize the fusion rings as quotients of polynomial rings. In particular, it provides a set of generators of the fusion ideal. In the type $A$ and $C$ cases this leads to minimal sets of generators  identical to those found in the above mentioned papers and suitable for nice combinatorics. The other cases are more involved and the naturally arising sets of generators are in general not minimal, as our case by case analysis reveals.

As a special feature of our approach we obtain two different fusion rings in each type depending on whether our quantum parameter has even or odd order. In type $G_2$ it moreover depends on whether $3$ divides this order or not, see Section \ref{G2A}; we avoid this special case for the rest of the introduction. To obtain the above mentioned fusion rings from the literature it is enough to restrict to quantum groups where the order of the root of unity is even. The ``odd" fusion rings however do not seem to have been studied previously (except implicitly in \cite{AP}). Finding a minimal set of generators for these fusion ideals seem to be even more challenging and so far no satisfactory answer is known. Based on our presentations we could verify the intriguing conjectural presentations of \cite{BK} for small ranks by computer calculations, but unfortunately are not able to prove or disprove them in general. We expect that an identification of the two presentations yields interesting new identities inside the character ring.

Let us explain our approach and results in a little more detail. We work with a simple complex Lie algebra $\mathfrak g$ with root system $R$. We choose a set of positive roots $R^+$ and then have inside the set $X$ of integral weights the cone $X^+ \subset X$ of dominant weights. Let $q \in \C$ be a root of unity of order $\ell$ and denote by $U_q(\mathfrak g)$ the quantum group  associated with $\mathfrak g$. If $\ell$ is even we set $\ell' = \ell/2$.

Inside the category of finite dimensional $U_q(\mathfrak g)$-modules there is the additive category $\mathcal T_q$ given by the {\it tilting modules}. This is a tensor category and its (split) Grothendieck ring $K$ has a $\Z$-basis consisting of the images of all isomorphism classes of indecomposable tilting modules. The latter are classified; for each $\lambda \in X^+$ there is a unique indecomposable tilting module $T_q(\lambda)$ with highest weight $\lambda$. Inside $\mathcal T_q$ we consider the tensor ideal  $\mathcal N_q$, cf. \cite{Atensor}, consisting of all {\it negligible tilting modules}, i.e. those tilting modules $Q$ for which $\op{Tr}_q(f) = 0$ for all $f \in \End_{\mathcal T_q} (Q)$, where $\op{Tr}_q$ denotes the {\it quantum trace}. The corresponding {\it fusion ring}
for $U_q(\mathfrak g)$ is  then the ring $\mathcal F_q = K/I_\ell$, where $I_{\ell}$ denotes the ideal in $K$ corresponding to $\mathcal N_q$.

To describe this fusion ring we relate it with the corresponding polynomial ring of characters. Let $W$ denote the Weyl group for $\mathfrak{g}$. The character ring  $\Z[X]^W$ has a $\Z$-basis given by the $\chi (\lambda)_{\lambda \in X^+}$, where for any $\lambda \in X$ we denote by $\chi(\lambda)$ the {\it Weyl character} at $\lambda$. These characters are linked for $w \in W$ by the formula $\chi(w \cdot \lambda) = (-1)^{\ell(w)} \chi(\lambda)$ with the {\it ``dot"-action} given by $w \cdot \lambda = w(\lambda + \rho) -\rho$ where $\rho$ is the half-sum of the positive roots. Consider the {\it fundamental alcove}
\begin{eqnarray*}
\mathcal{A}_\ell&=&\{\lambda \in X \mid 0 < \langle \lambda + \rho, \alpha^{\vee}\rangle  < \ell \text { for all } \alpha \in R^+\},
\end{eqnarray*}
if $\ell$ is odd and with $\ell$ replaced by $\ell'=\ell/2$ if $\ell$ is even. Then the tensor ideal $\mathcal N_q$ consists of those tilting modules which have no summands $T_q(\lambda)$ with $\lambda \in \mathcal A_{\ell}$. Note that this means $K/I_\ell = 0$ when $\ell, \ell'  < h$, where $h$ denotes the Coxeter number for $R$. Hence we assume always $\ell, \ell' \geq h$. In that case it is proved in \cite{AP} that $I_\ell$ may be described as
$$\left\{\sum a_\lambda \chi(\lambda) \in \Z[X]^W \mid a_\lambda = a_{s \cdot \lambda} \text { for some reflection } s \text { in a face of  } \mathcal{A}_\ell \right\}.$$
Based on this observation we use the representation theory and combinatorics of affine Weyl groups to obtain explicit descriptions of the fusion ideal for our fusion rings arising from quantum groups which makes it possible to connect them with existing descriptions in the literature (where $\ell$ is often replaced by the ``level"  $k = \ell - h$, respectively $k = \ell' - h'$ (see the case by case treatments below) and the dot action by the ordinary action).

The paper is organized as follows. In Section 2 we establish, based on \cite{AP}, the multiplication rules in the fusion ring $\mathcal F_q$ for $U_q(\mathfrak g)$ and give a general recipe how to produce a set of generators for the tensor ideal $\mathcal N_q$ (and hence for the fusion ideal $I_\ell$). We apply this for type $A$ in Section 3 to show how our formulas translate into the combinatorics in \cite{KorffStroppel} and demonstrate how our results easily provide minimal sets of generators for $I_\ell$. Then we introduce in Section 4 some analogous ``combinatorics" for type $C$ and in the even case we are again able to cut our set of generators down to a minimal such set. In Sections~5-7 we give presentations for the types $D$, $B$ and $G_2$, respectively.

\section{The fusion rule for $q$-groups at roots of unity} \label{s1}
In this section we recall the main definitions and facts about fusion
rings for quantum groups at complex roots of unity. Our presentation
is based on \cite {Atensor} and
\cite{AP}.

Suppose $\mathfrak g$ is a complex simple Lie algebra.
For an indeterminate $v$ denote by $U_v$ the quantum group over $\Q(v)$ corresponding
$\mathfrak g$. This is the $\Q(v)$-algebra with
generators $E_i, F_i, K_i^{\pm 1}$, $i = 1, 2, \dots , n=\mathrm{rank}(\mathfrak g)$
and relations as given e.g. in \cite[Chapter~5]{Ja}.

Set $A = \Z[v, v^{-1}]$. Then $A$ contains the quantum numbers
$[r]_d = \frac{v^{dr} - v^{-rd}}{v^d - v^{-d}}$ for any $r, d \in \Z$, $d \neq 0$, as
well as the corresponding $q$ binomials $\left[\genfrac{}{}{0pt}{}{m}{t}\right]_d$,
$m \in \Z, t \in \N$ defined via the factorials $[r]_d! = [r]_d[r-1]_d \cdots [1]_d$ for $r \geq 0$.
In case $d = 1$ they become the usual binomial numbers and so we will often omit the $d$ from the notation.

Let $C$ be the Cartan matrix associated with $\mathfrak g$. We denote by $D$ a diagonal
matrix whose entries are relatively prime natural numbers $d_i$ with the property that
$D C$ is symmetric. Then we set $E_i^{(r)} = E_i^r/[r]_{d_i}!$. With a similar expression
for $F_i^{(r)}$ we define now the $A$-form $U_A$ of $U_v$ to be the $A$-subalgebra of $U_v$
generated by the elements $E_i^{(r)}, F_i^{(r)}, K_i^{\pm 1}$, $i= 1, \dots, n$, $ r \geq 0$.
This is the Lusztig {\it divided power quantum group}.

Fix now a root of unity $q \in \C$ of order $\ell$.  The corresponding
quantum group is the specialization $U_q = U_A \otimes _A \C$ where $\C$ is considered
as an $A$-module via $v \mapsto q$, cf. \cite{Lu1}, \cite{Lu2}. We abuse notation and write
$E_i^{(r)}$ also for the element $E_i^{(r)} \otimes 1 \in U_q$ and similarly for
$F_i^{(r)}$.

We have a triangular decomposition $U_q = U_q^-U_q^0U_q^+$ with $U_q^-$ and $U_q^+$ being
the subalgebra generated by $F_i^{(r)}$ or $E_i^{(r)}$, $i = 1, \dots , n$,  $r \geq 0$,
respectively. The ``Cartan part'' $U_q^0$ is the subalgebra generated by $K_i^{\pm 1}$ and
$\left[\genfrac{}{}{0pt}{}{K_i}{t}\right]$, $i = 1, \dots , n, \; t \geq 0$, where
\begin{displaymath}
\left[\genfrac{}{}{0pt}{}{K_i}{t}\right] = \prod _{j=1}^t
\frac{K_iv^{d_i(1-j)} - K_iv^{-d_i(1-j)}}{v^{d_ij} - v^{-d_ij}}.
\end{displaymath}
We denote the ``Borel subalgebra'' $U_q^0U_q^+$ by $B_q$.

Recall that $U_v$ is a Hopf algebra, for explicit formulas for the comultiplication $\Delta$, counit $\epsilon$
and antipode $S$ see \cite[4.11]{Ja}. It is easy to see that their restrictions give $U_A$
the structure of a Hopf algebra over $A$. Then $U_q$ also gets an induced Hopf algebra structure.

Let $X$ be the lattice of integral weights with the positive cone
$X^+$ of integral dominant weights. Then each $\lambda \in X$ defines
in a natural way a $1$-dimensional $B_q$-module, also denoted $\la$,
and we set
\begin{equation}
\nabla_q(\lambda) = \Ind_{B_q}^{U_q} \lambda:=
F(\Hom_{B_q}(U_q,\lambda)),
\end{equation}
where $F$ is the functor which sends a $U_q$-module into its maximal
integrable submodule, cf. 2.8 in \cite{APW}.

Then $\nabla_q(\lambda) \neq 0$ iff $\lambda \in X^+$ and for such
$\lambda$ it contains a unique simple $U_q$-submodule which we denote
$L_q(\lambda)$.
The modules $L_q(\lambda)$, $\lambda \in X^+$ form a complete set of
pairwise non-isomorphic representatives for the isomorphism classes of
simple $U_q$-modules of type $\bf 1$.

\subsection{The category $\mathcal C_q$}
Let $\mathcal C_q$ be the category of finite dimensional
$U_q$-modules of type $\bf 1$, and denote by $\Gr(\mathcal C_q)$ its
Grothendieck group, i.e. the abelian group generated by the
isomorphism classes $[M]$ of objects $M$ in
$\mathcal C_q$ modulo the relation $[C]=[A]+[B]$ for any short exact
sequence $0 \rightarrow A\rightarrow C\rightarrow B \rightarrow 0$ in
$\mathcal C_q$.
The sets $\{[L_q(\lambda)]\}_{\lambda \in X^+}$
and $\{[\nabla_q(\lambda)]\}_{\lambda \in X^+}$ are two $\Z$-bases of
$\Gr(\mathcal C_q)$. For any element $f \in \Gr(\mathcal C_q)$ we
denote by $[f:\nabla_q(\lambda)]$ the coefficient of $f$ when
expressed in the basis $\{[\nabla_q(\lambda)]\}_{\lambda \in X^+}$,
i.e.
\begin{equation} f = \sum_{\lambda \in X^+} [f:\nabla_q(\lambda)]
[\nabla_q(\lambda)].
\end{equation}
If $f = [M]$ with $M \in \mathcal C_q$ we write
$[M:\nabla_q(\lambda)]$ instead of $[[M]:\nabla_q(\lambda)]$.
The comultiplication of $U_q$ turns $\mathcal C_q$ into a tensor
category and gives $\Gr(\mathcal C_q)$ a natural ring structure, the
{\it Grothendieck ring} of $\Gr(\mathcal C_q)$.

Special elements in $\Gr(\mathcal C_q)$ are the {\it Euler characters}
$\chi (N)$  of finite dimensional $B_q$-modules $N$ defined as
follows:
\begin{equation} \chi (N) = \sum_{i\geq 0} (-1)^i
[\mathcal{R}^i\Ind_{B_q}^{U_q} N],
\end{equation}
where $\mathcal{R}^i\Ind_{B_q}^{U_q}$ denotes the $i$th right derived
functor of the left exact functor $\Ind_{B_q}^{U_q}$. Then $\chi$ is
additive with respect to short exact sequences. Moreover, on the $1$-dimensional $B_q$-module
given by a character $\lambda \in X^+$ we have 
$\chi(\lambda) = [\nabla_q(\lambda)]$, by the
$q$-version of Kempf's vanishing theorem, \cite{RH}. More generally,
\begin{equation}
\label{altern}
\chi(\mu)=(-1)^{l(w)} \chi(w\cdot \mu)
\end{equation} for all $\mu \in X$ and $w \in W$. 

The $B_q$-modules we want to consider are finite dimensional and split
into weight spaces
$M = \oplus_\mu M_\mu$ as $U_q^0$-modules. The corresponding $B_q$-filtration of $M$
gives via the additivity of $\chi$ 
\begin{equation}
\chi (M) = \sum_\mu (\dim M_\mu) \chi(\mu).{\label {eq1}}
\end{equation}
The tensor identity $\mathcal{R}^i \Ind_{B_q}^{U_q} (M \otimes \mu)
\simeq M \otimes \mathcal{R}^i\Ind_{B_q}^{U_q} \mu $ for all $i \in \N$ and
$M \in \mathcal C_q$ implies
\begin{equation}
\chi (M \otimes \mu) = [M] \chi (\mu).
\end{equation}
The additivity of $\chi$ then gives for each $\lambda
\in X^+$
\begin{equation}
[M] [\nabla_q(\lambda)] = \chi (M \otimes \lambda) = \sum_{\nu \in X}
(\dim M_\nu) \chi(\lambda + \nu) {\label {eq3}}.
\end{equation}
Using \eqref{altern} we may rewrite this as
\begin{equation}
[M] [\nabla_q(\lambda)] = \sum_{\nu \in X^+} \left(\sum_{w\in W}
(-1)^{l(w)} \dim M_{w\cdot \nu - \lambda}\right) [\nabla_q(\nu)].
{\label {eq4}}
\end{equation}

This formula can then also be written as
\begin{equation}
[M \otimes \nabla_q(\lambda): \nabla_q(\nu)] = \sum_{w \in W}
(-1)^{l(w)}\dim M_{w \cdot \nu - \lambda}. {\label {eq5}}
\end{equation}
If all weights of $M\otimes \lambda$ are dominant after adding $\rho$,
then simplifies to
\begin{equation}
[M \otimes \nabla_q(\lambda): \nabla_q(\nu)] = \dim M_{\nu -
\lambda}.{\label {eq6}}
\end{equation}

The antipode on $U_q$ gives the linear dual $M^*$ of a $U_q$-module
$M$ a $U_q$-module structure. For each $\lambda \in X^+$ we define
$\Delta_q(\la)$ as the dual of
$\nabla_q(-w_0\la)$. Then $\Delta_q(\la)$ has the same character as
$\nabla_q(\lambda)$ and it has $L_q(\lambda)$ as its unique simple quotient
(it is the Weyl module with highest
weight $\lambda$).
A module $Q \in \mathcal C_q$ is called {\it tilting} if it has both a
$\nabla_q$- and a $\nabla_q$-filtration.
For each $\lambda \in X^+$ there exists a
unique indecomposable tilting module $T_q(\lambda)$ which has highest
weight $\lambda$. Moreover, the set $\{T_q(\lambda)\}_{\lambda \in
X^+}$ is, up to isomorphisms, a
complete list of indecomposable tilting modules in $\mathcal C_q$, see
e.g. \cite{Ringel}, \cite{Do}, \cite{Atensor}.

We denote by $\mathcal T_q$ the subcategory of $\mathcal C_q$
consisting of all tilting modules. This is an additive (but not
abelian) subcategory and since the collection of modules
with a $\nabla_q$-filtration is stable under tensor products, we see
that $\mathcal T_q$ is an additive tensor category, see \cite{Atensor}.

Set $K = \Gr(\mathcal T_q)$, the Grothendieck ring of $\mathcal T_q$. Since all 
short exact sequences consisting of tilting modules split, $K$ is defined
by
the relations $[Q] = [Q_1] + [Q_2]$ whenever $Q = Q_1 \oplus Q_2$ in
$\mathcal T_q$. The multiplicative structure on $K$ comes from the tensor product on $\mathcal T_q$.  
As a $\Z$-module $K$ is free with basis $\{[T_q(\lambda)]\}_{\lambda \in X^+}$. Note
that we again write $[Q]$ for the image in $K$ of a module $Q
\in \mathcal T_q$. We shall denote by $(f:T_q(\lambda))$ the
coefficient of $f \in K$
when expressed in the above basis. Thus if $Q \in \mathcal T_q$, then
$(Q:T_q(\lambda))$ is the multiplicity of $T_q(\lambda)$  as a direct
summand of $Q$.

We denote by $\mathcal{A}_\ell$ the {\it fundamental
alcove} in $X^+$, see \cite{Atensor} which is defined, excluding the special type $G_2$ treated later, as follows. When $\ell$ is odd  then
\begin{eqnarray*}
\mathcal{A}_\ell& = &\{\lambda \in X^+ \mid \langle \lambda + \rho,
\alpha_0^{\vee} \rangle < \ell  \}
\end{eqnarray*}
where $\alpha_0$ denotes the maximal short root. The closure of $\mathcal{A}_\ell$ is
\begin{equation*}
\bar{\mathcal{A}_\ell} = \{\lambda \in X \mid \langle
\lambda + \rho, \alpha_i^{\vee} \rangle \geq 0\text { for all simple roots } \alpha_i \text { and }  \langle
\lambda + \rho, \alpha_0^{\vee} \rangle \leq \ell \}.
\end{equation*}

If $\ell$ is even, then we set $\ell' = \ell/2$ and
\begin{eqnarray}
\mathcal{A}_\ell &=& \{\lambda \in X^+ \mid \langle \lambda + \rho, \beta_0^{\vee}
\rangle < \ell'  \},
\end{eqnarray}
where $\beta_0$ denotes the maximal (long) root. Let $\bar{\mathcal{A}_\ell}$ be its closure. In both the odd and even case the {\it affine Weyl group} $W_\ell$ is
the group generated by the reflections in the walls of $\mathcal{A}_\ell$. A
weight is called {\it $\ell$-singular} if it lies on a wall for
$W_\ell$. In case $G_2$ the shape of $\mathcal{A}_\ell$ depends additionally on whether  $3$ divides $\ell$ or not, see Section \ref{G2A}.

\subsubsection{Warning.} Note that in classical types for odd $\ell$ (respectively for $\ell$ prime to $3$ when the type is $G_2$) our affine Weyl group $W_\ell$ is actually the
affine Weyl group for the dual root system in the Bourbaki convention, cf. \cite[Chapter VI, \S2]{Bo}.
In the case of even $\ell$ (respectively for $\ell$ not prime to $3$ if the type is $G_2$) our affine Weyl group $W_\ell$ corresponds to the affine Weyl group
for the root system itself.
\vskip .5 cm
If $\lambda \in \bar{\mathcal{A}_\ell} \cap X^+$, then the linkage principle 
(\cite{AP}) gives
\begin{equation}L_q(\lambda) = \nabla_q(\lambda) = T_q(\lambda)
{\label {eq7}}.
\end{equation}
We have the following formula, cf \cite{AP}, \cite{GM}

\begin{thm}
Let $Q \in \mathcal T_q$ and $\lambda \in \mathcal{A}_\ell$. Then
$$(Q:T_q(\lambda)) = \sum_w (-1)^{l(w)} [Q:\nabla_q(w\cdot
\lambda)],$$
where the sum
runs over those $w \in W_\ell$ for which $w\cdot \lambda \in X^+$.
\end{thm}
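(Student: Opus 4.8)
The plan is to show that each side of the claimed identity defines an additive functional on $K$ and then to verify the identity on the basis $\{[T_q(\mu)]\}_{\mu\in X^+}$. Write $\phi_\lambda(Q)=\sum_{w}(-1)^{l(w)}[Q:\nabla_q(w\cdot\lambda)]$ for the right-hand side, the sum running over $w\in W_\ell$ with $w\cdot\lambda\in X^+$, where the numbers $[Q:\nabla_q(\nu)]$ are read off from a $\nabla_q$-filtration of the tilting module $Q$. Since such a filtration exists and its multiplicities are additive on direct sums, $\phi_\lambda$ is well defined and additive, and so is $Q\mapsto (Q:T_q(\lambda))$; hence it suffices to prove the Kronecker identity $\phi_\lambda([T_q(\mu)])=\delta_{\lambda\mu}$ for all $\mu\in X^+$.

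Next I would dispose of the easy cases by the linkage principle. If $\mu$ is not $W_\ell$-linked to $\lambda$, then each $\nabla_q(w\cdot\lambda)$ lies in a different linkage class from $T_q(\mu)$, so every multiplicity $[T_q(\mu):\nabla_q(w\cdot\lambda)]$ vanishes and $\phi_\lambda([T_q(\mu)])=0=\delta_{\lambda\mu}$. If $\mu=\lambda$, then $\lambda\in\mathcal{A}_\ell$ gives $T_q(\lambda)=\nabla_q(\lambda)$ by \eqref{eq7}, so $[T_q(\lambda):\nabla_q(\nu)]=\delta_{\lambda\nu}$; since $\lambda$ is $\ell$-regular its dot-stabiliser in $W_\ell$ is trivial, so only $w=e$ contributes and $\phi_\lambda([T_q(\lambda)])=1$. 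The whole content is therefore the vanishing $\phi_\lambda([T_q(\mu)])=0$ when $\mu\neq\lambda$ but $\mu\in W_\ell\cdot\lambda$.

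For this I would induct on $l(x)$, where $\mu=x\cdot\lambda$ with $x\in W_\ell$, using the quantum translation functors $\Theta_s$ (translation onto an $s$-wall of $\mathcal{A}_\ell$ and back) of \cite{AP, APW}. Two facts drive the argument. First, the translation principle gives, for tilting $M$, the relation $[\Theta_s M:\nabla_q(\kappa)]=[M:\nabla_q(\kappa)]+[M:\nabla_q(s\cdot\kappa)]$, so the $\nabla_q$-multiplicity function of $\Theta_s M$ is invariant under $\kappa\mapsto s\cdot\kappa$; pairing $w\leftrightarrow sw$ in the orbit of $\lambda$ and using $(-1)^{l(sw)}=-(-1)^{l(w)}$ makes $\phi_\lambda$ vanish on $\Theta_s M$ for every tilting $M$. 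Second, $\Theta_s$ preserves tilting modules: choosing a wall across which $\mu$ is the reflection of a dominant weight $\mu'=x'\cdot\lambda$ lying one step lower (so $l(x)=l(x')+1$, which is possible since the dominant region is a union of alcoves connected by length-increasing wall-crossings), the module $T_q(\mu)$ occurs as the highest summand of $\Theta_s T_q(\mu')$, the remaining indecomposable summands being $T_q(\kappa)$ with $\kappa\in W_\ell\cdot\lambda$ strictly below $\mu$. Since $\phi_\lambda(\Theta_s T_q(\mu'))=0$ and the inductive hypothesis gives $\phi_\lambda(T_q(\kappa))=0$ for all these lower summands (the base case $x'=e$ yielding $\Theta_s T_q(\lambda)=T_q(s\cdot\lambda)$ with $\phi_\lambda=1-1=0$), additivity forces $\phi_\lambda([T_q(\mu)])=0$.

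The step I expect to be the main obstacle is the clean bookkeeping of the translation principle at the \emph{boundary}, namely when $\kappa$ or $s\cdot\kappa$ fails to be dominant or is $\ell$-singular: there the naive relation $[\Theta_s M:\nabla_q(\kappa)]=[M:\nabla_q(\kappa)]+[M:\nabla_q(s\cdot\kappa)]$ needs correction, and one must check that the pairing $w\leftrightarrow sw$ still matches signs and multiplicities. I would handle this exactly as in \eqref{eq4}--\eqref{eq5}: rewrite $\phi_\lambda$ as an unrestricted alternating sum over $W_\ell$, absorbing non-dominant representatives together with their signs via the finite Weyl symmetry $\chi(u\cdot\nu)=(-1)^{l(u)}\chi(\nu)$ of \eqref{altern}, so that $w\mapsto sw$ becomes a genuinely sign-reversing, multiplicity-preserving involution on the entire sum. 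With this reformulation the vanishing is forced term by term, and the dominance constraint is reinstated only at the very end.
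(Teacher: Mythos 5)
Your proposal is correct and follows essentially the same route as the paper's (sketched) proof: reduce by additivity to $Q=T_q(\nu)$, settle the cases $\nu=\lambda$ and $\nu\notin W_\ell\cdot\lambda$ via \eqref{eq7} and the linkage principle, and obtain the remaining vanishing from the inductive construction of $T_q(\nu)$ by translation functors. You merely spell out the wall-crossing/sign-reversing-involution details that the paper delegates to \cite{AP} and \cite{Atensor}.
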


\begin{proof}[{\bf Proof} (Sketch, cf. \cite{AP}):]
Since both sides are additive in $Q$ we may assume that $Q = T_q(\nu)$
for some $\nu \in X^+$. Then the left hand side is $\delta_{\lambda,
\nu}$. When $\nu = \lambda$ we see from
\eqref{eq7} that the right hand side is $1$. The linkage principle
\cite{AP} gives that it is $0$ unless $\nu \in W_\ell \cdot \lambda$
and so we are left to show that it vanishes when $\nu = w\cdot
\lambda$ for some non-trivial element $w\in W_\ell$. This follows from
the
inductive construction of $T_q(\nu)$ via translation functors, see
\cite{Atensor}.
\end{proof}

By the above we can apply this formula to $Q = L_q(\lambda) \otimes
L_q(\mu)$ when $\lambda, \mu \in \bar{\mathcal{A}_\ell}$ and obtain via formula
\eqref{eq5} the following {\it fusion rule}.

\begin{cor}
\label{Cor12}
Suppose $\lambda, \mu \in \bar{\mathcal{A}_\ell} \cap X^+$. Then for $\nu \in
\mathcal{A}_\ell$ we have
$$ (L_q(\lambda) \otimes L_q(\mu) : T_q(\nu)) = \sum_{w \in W_\ell}
(-1)^{l(w)} \dim \nabla_q(\lambda)_{w\cdot \nu - \mu} .$$
\end{cor}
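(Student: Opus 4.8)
The plan is to apply the preceding Theorem to the tilting module $Q=L_q(\lambda)\otimes L_q(\mu)$ and then to unwind the resulting multiplicities by means of the tensor formula \eqref{eq5}. First I would check that the Theorem is applicable: since $\lambda,\mu\in\bar{\mathcal A_\ell}\cap X^+$, formula \eqref{eq7} gives $L_q(\lambda)=T_q(\lambda)$ and $L_q(\mu)=\nabla_q(\mu)=T_q(\mu)$, and as $\mathcal T_q$ is closed under tensor products (it is a tensor category), $Q$ is tilting. Applying the Theorem to this $Q$ and to $\nu\in\mathcal A_\ell$ gives
$$(L_q(\lambda)\otimes L_q(\mu):T_q(\nu))=\sum_{\substack{w\in W_\ell\\ w\cdot\nu\in X^+}}(-1)^{l(w)}\,[L_q(\lambda)\otimes L_q(\mu):\nabla_q(w\cdot\nu)].$$

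Next, for each fixed $w$ with $w\cdot\nu\in X^+$ I would evaluate the multiplicity on the right. Writing $L_q(\mu)=\nabla_q(\mu)$ and taking $M=L_q(\lambda)$ in \eqref{eq5}, with $w\cdot\nu$ playing the role of the dominant weight there, this multiplicity equals $\sum_{u\in W}(-1)^{l(u)}\dim L_q(\lambda)_{u\cdot(w\cdot\nu)-\mu}$. Substituting back turns the expression into a double sum indexed by pairs $(w,u)$ with $w\in W_\ell$, $w\cdot\nu\in X^+$, and $u\in W$.

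The heart of the matter — and the step I expect to be the main obstacle — is to collapse this double sum into the single sum over $W_\ell$ appearing in the statement. Here I would use that $u\cdot(w\cdot\nu)=(uw)\cdot\nu$ and that the sign $(-1)^{l(\,\cdot\,)}$ is a homomorphism on the Coxeter group $W_\ell$, so that $(-1)^{l(w)}(-1)^{l(u)}=(-1)^{l(uw)}$; since $W$ sits inside $W_\ell$ as the standard parabolic fixing the special vertex $-\rho$, the $W$- and $W_\ell$-lengths of $u$ agree, so the sign manipulation is consistent. It then remains to verify that $(w,u)\mapsto uw$ is a bijection from $\{w\in W_\ell:w\cdot\nu\in X^+\}\times W$ onto $W_\ell$. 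Because $\nu$ lies in the interior of $\mathcal A_\ell$ it is $\ell$-regular, so the dot action of $W_\ell$ on its orbit is free; and each $W$-dot-orbit of a regular weight contains exactly one dominant representative. Hence every $x\in W_\ell$ factors uniquely as $x=uw$ with $w\cdot\nu\in X^+$ and $u\in W$, which is the asserted bijection.

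Finally, reindexing the double sum by $x=uw$ yields
$$(L_q(\lambda)\otimes L_q(\mu):T_q(\nu))=\sum_{x\in W_\ell}(-1)^{l(x)}\dim L_q(\lambda)_{x\cdot\nu-\mu},$$
and a last use of \eqref{eq7} in the form $L_q(\lambda)=\nabla_q(\lambda)$ replaces the weight multiplicities of $L_q(\lambda)$ by those of $\nabla_q(\lambda)$, giving the claimed fusion rule after renaming $x$ to $w$. I anticipate that the only genuinely delicate point is the bijection together with the compatibility of the two sign characters; everything else is a direct combination of \eqref{eq7}, the Theorem, and \eqref{eq5}.
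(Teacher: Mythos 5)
Your proposal is correct and follows exactly the route the paper takes (the paper derives the corollary in one line by applying the preceding Theorem to $Q=L_q(\lambda)\otimes L_q(\mu)$ and invoking \eqref{eq5}); you have merely spelled out the details the authors leave implicit, namely the collapse of the double sum over $\{w\in W_\ell: w\cdot\nu\in X^+\}\times W$ into a single sum over $W_\ell$ via the regularity of $\nu$ and the compatibility of the sign characters on the parabolic $W\subset W_\ell$. All of these details are handled correctly.
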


Note that the right hand side is a known integer because the weight
spaces of $\nabla_q(\lambda)$ are determined e.g. by the Weyl
character formula.
A simple special case of this is the following (compare \eqref{eq6}).

\begin{cor}
\label{Cor13}
Suppose $\lambda, \mu \in \bar{\mathcal{A}_\ell} \cap X^+$ are such that $\eta +
\mu \in \bar{\mathcal{A}_\ell}$ for all weights $\eta$ of $\nabla_q(\lambda)$.
Then for any $\nu \in \mathcal{A}_\ell$ we have
$$ \left(L_q(\lambda) \otimes L_q(\mu) : T_q(\nu)\right) = \dim
\nabla_q(\lambda)_{\nu - \mu} .$$
\end{cor}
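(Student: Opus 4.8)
The plan is to deduce this directly from Corollary \ref{Cor12} and show that the stated hypothesis collapses the sum over $W_\ell$ to its single identity term. This exactly mirrors the passage from \eqref{eq5} to \eqref{eq6} in the finite case, with the ordinary Weyl group replaced by the affine group $W_\ell$ and the dominance condition there replaced by the alcove condition here. First I would write down the conclusion of Corollary \ref{Cor12},
$$ (L_q(\lambda) \otimes L_q(\mu) : T_q(\nu)) = \sum_{w \in W_\ell} (-1)^{l(w)} \dim \nabla_q(\lambda)_{w\cdot \nu - \mu},$$
and observe that the $w$-summand is nonzero only when $w\cdot \nu - \mu$ is an actual weight $\eta$ of $\nabla_q(\lambda)$, i.e.\ only when $w\cdot \nu = \eta + \mu$ for such an $\eta$.

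Next I would feed in the standing hypothesis: every weight $\eta$ of $\nabla_q(\lambda)$ satisfies $\eta + \mu \in \bar{\mathcal{A}_\ell}$. Hence any surviving summand forces $w\cdot \nu \in \bar{\mathcal{A}_\ell}$. Since $\nu \in \mathcal{A}_\ell$ lies in the open fundamental alcove, it is not $\ell$-singular, i.e.\ it lies on no wall of $W_\ell$. Because the dot action of $W_\ell$ permutes its walls among themselves, $w\cdot \nu$ is again not $\ell$-singular for every $w \in W_\ell$, so it cannot lie on the boundary of $\bar{\mathcal{A}_\ell}$; thus the containment $w\cdot \nu \in \bar{\mathcal{A}_\ell}$ improves to $w\cdot \nu \in \mathcal{A}_\ell$.

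The decisive step is then the fundamental-domain property of $\mathcal{A}_\ell$ for the dot action of $W_\ell$: the open alcove meets each $W_\ell$-orbit in exactly one point, and a non-$\ell$-singular weight has trivial stabilizer. Consequently $w\cdot \nu \in \mathcal{A}_\ell$ together with $\nu \in \mathcal{A}_\ell$ gives $w\cdot \nu = \nu$, and regularity of $\nu$ forces $w = e$. Only the identity term of the sum then remains, contributing $(-1)^{l(e)} \dim \nabla_q(\lambda)_{\nu - \mu} = \dim \nabla_q(\lambda)_{\nu - \mu}$, which is precisely the asserted formula.

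I do not expect a genuine obstacle, as this is a true corollary of the preceding one; the one point requiring care is the geometric input in the last step — that a non-$\ell$-singular weight in $\mathcal{A}_\ell$ has trivial $W_\ell$-stabilizer and that its orbit meets $\bar{\mathcal{A}_\ell}$ only at itself. This rests on $\mathcal{A}_\ell$ being a strict fundamental domain for the dot action and on $W_\ell$-translates of regular points staying off the walls; both are standard but should be invoked explicitly so that the collapse of the sum is fully justified.
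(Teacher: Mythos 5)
Your proposal is correct and follows exactly the route the paper intends: the paper offers no separate proof, presenting the corollary as the special case of Corollary \ref{Cor12} in which the hypothesis forces every surviving $w\cdot\nu$ into $\bar{\mathcal{A}_\ell}$, hence (by regularity of $\nu$ and the fundamental-domain property) $w=e$, precisely mirroring the passage from \eqref{eq5} to \eqref{eq6}. Your write-up simply makes those standard details explicit.
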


\subsection{The fusion rings  $\mathcal F_q=\mathcal
F_q(\mathfrak{g},\ell)$}
Let $\mathcal N_q$ denote the subcategory of $\mathcal T_q$ consisting
of those tilting modules which have no summands $T_q(\lambda)$ with
$\lambda \in \mathcal A_\ell$.
These are the {\it negligible tilting modules} which all have quantum
dimension $0$, cf. \cite[\S3]{AP}. It is also proved in \cite{AP} that
$\mathcal N_q$ is a tensor ideal in $\mathcal T_q$ and the quotient
$\mathcal T_q/\mathcal N_q$ is semisimple.

We define the {\it fusion ring} $\mathcal F_q=\mathcal
F_q(\mathfrak{g},\ell)$ to be the corresponding quotient of
$K$. For $\lambda \in \mathcal{A}_\ell$  we denote the image of
$[T_q(\lambda)]$ in
$\mathcal F_q$ by $[\lambda]$. Then the set $\{[\lambda]\}_{\lambda
\in \mathcal{A}_\ell}$ constitutes a $\Z$-basis for $\mathcal F_q$. Note that
$\mathcal F_q$ is a commutative,
associative ring. The commutativity follows from the fact that a
tilting module is determined, up to isomorphisms, by its character, so
that the tensor product on $\mathcal T_q$ is
commutative.

By Corollary \ref{Cor12} the multiplication in $\mathcal F_q$ is given
in basis vectors by
\begin{eqnarray}
[\lambda] [\mu] &=& \sum_{\nu \in \mathcal{A}_\ell} (L_q(\lambda) \otimes
L_q(\mu) : T_q(\nu)) [\nu]\nonumber\\
 &=&\sum_{\nu \in \mathcal{A}_\ell} (\sum_{w \in W_\ell} (-1)^{l(w)} \dim
 \nabla_q(\lambda)_{w\cdot \nu - \mu}) [\nu]{\label {eq8}}.
\end{eqnarray}

We extend the notation $[\lambda] \in \mathcal F_q$ to all $\lambda
\in X$ in the following way: If $\lambda \in X$ is $\ell$-singular,
then we set $[\lambda] = 0$. If $\lambda$ is
not $\ell$-singular, then there exists a unique $w \in W_\ell$ with
$w\cdot \lambda \in \mathcal{A}_\ell$ and we define $[\lambda] = (-1)^{l(w)}
[w\cdot \lambda]$. In this notation we can also
formulate \eqref{eq8} as 
\begin{equation}  [\lambda] [\mu] = \sum_{\nu \in X} \dim
L_q(\lambda)_\nu [\mu + \nu] {\label {eq9}}
\end{equation}
for all $\lambda, \mu \in \mathcal{A}_\ell$.

\subsubsection{Generators of $\mathcal N_q$}
Suppose $\omega_i \in \mathcal A_{\ell}$ for all $i$. For classical types this
means: If $\ell$ is odd, then $\ell \geq n+1$ for type $A_n$, $\ell
\geq 2n+1$ for types $B_n$ and $C_n$, and
$\ell \geq 2n-1$ for type $D_n$. If $\ell$ is even, then $\ell \geq
2n+2$ for types $A_n, B_n$ and $C_n$, whereas $\ell \geq 4n-2$ for
type $D_n$.

The linkage principle shows that this assumption implies that
$L_q(\omega_i) = T_q(\omega_i)$ for all $i$. Here the $\omega_i$'s are the fundamental weights in $X$.
Hence the
$[L_q(\omega_i)]$'s generate $K$.

Let us denote by
$\leq' $ the ordering on $X$ defined by $\la \leq' \mu$ iff $\mu =
\la + \sum_i m_i\omega_i$ for some $m_i \in \Z_{\geq 0}$. Note that this ordering is not
the same as the usual ordering $\leq$ on $X$ defined by the positive roots. We do have, however,
that if $\lambda \leq' \mu$, then $\mu - \lambda$ is a rational linear combination of positive roots
(with denominators at worst equal to the index of connection for our root system).

\begin{prop}
\label{generators}
The tensor ideal $\mathcal N_q$ in $\mathcal T_q$ is generated by the
set $\mathcal{G}=\{T_q(\mu) \mid \mu \text { minimal in } X^+\setminus
\mathcal{A}_\ell \text { with respect to } \leq' \}.$
\end{prop}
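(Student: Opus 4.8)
The plan is to show the two inclusions: that every module in $\mathcal{G}$ is negligible (so the tensor ideal generated by $\mathcal{G}$ is contained in $\mathcal{N}_q$), and conversely that every indecomposable negligible tilting module $T_q(\nu)$ lies in the tensor ideal generated by $\mathcal{G}$. The first inclusion is the easy direction: by definition $\mathcal{N}_q$ consists of tilting modules with no summand $T_q(\lambda)$ for $\lambda \in \mathcal{A}_\ell$, and each $T_q(\mu)$ with $\mu \in X^+ \setminus \mathcal{A}_\ell$ is itself indecomposable with highest weight $\mu \notin \mathcal{A}_\ell$, hence negligible; so $\mathcal{G} \subseteq \mathcal{N}_q$ and the generated tensor ideal sits inside $\mathcal{N}_q$ since $\mathcal{N}_q$ is a tensor ideal.

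For the reverse inclusion I would argue by induction on the weight $\nu$ with respect to $\leq'$, where $\nu \in X^+ \setminus \mathcal{A}_\ell$ ranges over highest weights of indecomposable negligible tilting modules. If $\nu$ is minimal in $X^+ \setminus \mathcal{A}_\ell$ with respect to $\leq'$, then by definition $T_q(\nu) \in \mathcal{G}$ and there is nothing to prove. Otherwise there exists some $\mu \in X^+ \setminus \mathcal{A}_\ell$ with $\mu <' \nu$, i.e.\ $\nu = \mu + \sum_i m_i \omega_i$ with $m_i \in \Z_{\geq 0}$ not all zero. The key step is to realize $T_q(\nu)$ as a direct summand of a tensor product of the form $T_q(\mu) \otimes M$, where $M$ is built from the fundamental tilting modules $T_q(\omega_i) = L_q(\omega_i)$. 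Concretely, I would take $M = \bigotimes_i T_q(\omega_i)^{\otimes m_i}$, whose highest weight is exactly $\sum_i m_i \omega_i$, so that $T_q(\mu) \otimes M$ has $\nu = \mu + \sum_i m_i \omega_i$ as its highest weight with multiplicity one. Since tensor products of tilting modules are tilting, $T_q(\mu) \otimes M$ is a tilting module and therefore decomposes as a direct sum of indecomposable tilting modules; because $\nu$ is its unique highest weight (with multiplicity one), $T_q(\nu)$ occurs as a summand exactly once. Hence $T_q(\nu)$ lies in the tensor ideal generated by $T_q(\mu)$, and by the induction hypothesis $T_q(\mu)$ lies in the ideal generated by $\mathcal{G}$, completing the inductive step.

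I expect the main obstacle to be the verification that $\nu$ really is the highest weight of $T_q(\mu) \otimes M$ with multiplicity one, and more subtly that the weight ordering is compatible with the inductive descent. One must check that every other indecomposable summand $T_q(\eta)$ appearing in $T_q(\mu) \otimes M$ has highest weight $\eta$ satisfying $\eta \leq \nu$ in the usual root ordering, and then confirm that such $\eta$ lying in $X^+ \setminus \mathcal{A}_\ell$ are either $\nu$ itself or strictly smaller than $\nu$ with respect to $\leq'$ so that induction applies (summands with $\eta \in \mathcal{A}_\ell$ are harmless here since we only need to generate the negligible $T_q(\nu)$, not to control the full decomposition). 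The relation between $\leq$ and $\leq'$ noted just before the statement---that $\lambda \leq' \mu$ forces $\mu - \lambda$ to be a nonnegative rational combination of positive roots---is precisely what bridges the highest-weight estimate (phrased in $\leq$) with the inductive parameter (phrased in $\leq'$), and making this interplay precise is the technical heart of the argument.
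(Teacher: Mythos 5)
Your argument is correct for the proposition as stated, but it takes a genuinely different route from the paper's. You run a well-founded induction along $\leq'$ (legitimate: a strictly $\leq'$-decreasing chain of dominant weights strictly decreases the sum of the coordinates in the $\omega$-basis) and, at a non-minimal $\nu$, extract $T_q(\nu)$ as the multiplicity-one top summand of $T_q(\mu)\otimes\bigotimes_i T_q(\omega_i)^{\otimes m_i}$ for some $\mu<'\nu$ in $X^+\setminus\mathcal{A}_\ell$. This is clean, but it leans entirely on the tensor ideal generated by $\mathcal{G}$ being closed under direct summands. The paper instead fixes $\gamma\in\sum_i\Z\alpha_i^\vee$ with $\langle\alpha_j,\gamma\rangle>0$ and $\langle\omega_j,\gamma\rangle>0$ for all $j$, takes a putative counterexample $\lambda$ minimizing $\langle\lambda,\gamma\rangle$, subtracts a single fundamental weight (using that a non-$\leq'$-minimal $\lambda$ already has some $\lambda-\omega_i\in X^+\setminus\mathcal{A}_\ell$), and --- crucially --- shows that the complement $T'$ in $T_q(\lambda-\omega_i)\otimes L_q(\omega_i)=T_q(\lambda)\oplus T'$ also lies in the ideal, since each of its summands $T_q(\eta)$ has $\eta<\lambda$ in the root order and hence $\langle\eta,\gamma\rangle<\langle\lambda,\gamma\rangle$. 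That extra control is what lets the statement descend to the ring-theoretic fusion ideal $I_\ell\subset K$, where one must write $[T_q(\lambda)]=[T_q(\lambda-\omega_i)\otimes L_q(\omega_i)]-[T']$ and cannot simply cancel summands; this is the form in which the proposition is actually used in the later sections.

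One concrete warning about your final paragraph: the patch you sketch there --- applying your $\leq'$-induction to the other summands $T_q(\eta)$ --- would not work. Those $\eta$ are smaller than $\nu$ only in the root order $\leq$, and $\nu-\eta\in\sum_i\Z_{\geq0}\alpha_i$ does not imply $\nu-\eta\in\sum_i\Z_{\geq0}\omega_i$ (already for $\mathfrak{sl}_3$ one has $\alpha_1=2\omega_1-\omega_2$), so such $\eta$ need not be $\leq'$-comparable to $\nu$ and your induction hypothesis does not reach them. The paper's functional $\gamma$, positive on both the simple roots and the fundamental weights, is precisely the device that makes both kinds of descent decrease one induction parameter; if you want to control the complementary summands (as is needed for $I_\ell$), you should induct on $\langle\cdot,\gamma\rangle$ rather than on $\leq'$. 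A minor point: your parenthetical about summands with $\eta\in\mathcal{A}_\ell$ is moot --- there are none, since $T_q(\mu)$ is negligible and $\mathcal N_q$ is a tensor ideal.
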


\begin{proof}
By definition of $\mathcal N_q$ it contains the ideal $I$ generated
by $\mathcal{G}$. To show the converse we need to check that
$T_q(\lambda)\in I$ for all $\la \in X^+\setminus \mathcal{A}_\ell$. To see this pick
$\gamma \in \sum_i \Z\alpha_i^\vee$ such that
$\langle \alpha_j, \gamma\rangle \in \Z_{>0}$ for all simple roots $\alpha_j$.
Multiplying $\gamma$ if necessary by a large enough integer we can ensure
that also $\langle\omega_j, \gamma\rangle \in \Z_{>0}$ for all $j$.
Suppose now that there exists $\lambda \in X^+\setminus \mathcal{A}_\ell$ with $T_q(\lambda) \notin I$.
Then we choose such a $\lambda$ for which $\langle \lambda, \gamma \rangle$ is minimal.
Now this $\lambda$ cannot be minimal with respect to the ordering $\leq'$ because then
$T_q(\lambda)$ would be one of the generators for $I$. Hence there exists $i$ such that $\lambda - \omega_i \in
X^+\setminus \mathcal{A}_\ell$. As $\langle \lambda - \omega_i, \gamma \rangle < \langle \lambda, \gamma \rangle$
our assumption on $\lambda$ implies that $T_q(\lambda - \omega_i) \in I$.
But then $I$ also contains $T_q(\la - \omega_i) \otimes
L_q(\omega_i)$. Now this tilting module decomposes as $T_q(\lambda)
\oplus T'$ with $T'=\bigoplus_{\nu} a_\nu T_q(\nu)$, where $a_\nu\not= 0$
implies $\nu<\la$ and $\nu\in X^+\setminus \mathcal{A}_\ell$. In particular
$T'\in I$ because all its summands $T_q(\nu)$ satisfy $\langle \nu, \gamma \rangle < \langle \lambda, \gamma \rangle$.
Hence we have a contradiction.
\end{proof}

\begin{remark}
In type $A$ we always have that the set of minimal elements in
$X^+\setminus \mathcal{A}_\ell$ are those which belong to the upper wall of
$\mathcal{A}_\ell$. Case by case considerations reveals that the
same is also true for type $C$ whereas for types $B$ and $D$ it is
only so when $\ell$ is odd.

\end{remark}

\section{Three descriptions in Type $A$}
We shall connect our fusion ring directly with the description of the
Verlinde algebra for $\hat{\mathfrak{sl}}_n$ at a fixed level $k$
given by Korff and the
second author, \cite{KorffStroppel}, \cite{W}, and extend the combinatorial
techniques developed there to the case
$\mathfrak{g}=\mathfrak{sp}_{2n}$ in the next section. Note
that these two fusion rings play a special role, since their explicit
ring structure is known and they are reduced complete intersection
rings, see \cite{BK}.

In this section we consider first $\mathfrak {gl}_n$ and then
$\mathfrak {sl}_n$. We assume for convenience that $\ell$ is odd
(except in the last subsection where we explain the
easy transition to the even case). The corresponding level $k$ is then
given by $k = \ell-n$.

\subsection{Fusion algebras for $\mathfrak {gl}_n$}
\label{sec:gl}
We shall now study the fusion ring in the case where $\mathfrak g =
\mathfrak {gl}_n$. In this case we identify $X=\Z^n$ by choosing the
standard basis $\{\epsilon_1, \epsilon_2, \cdots , \epsilon_n\}$ and
write $\la\in X$ as $\lambda = \sum_{i = 1}^n \lambda_i \epsilon_i$ or
alternatively
$\lambda = (\lambda_1, \lambda_2, \cdots ,\lambda_n)$. The positive
roots for $\mathfrak {gl}_n$ are $\epsilon_i - \epsilon_j$ with $1
\leq i < j \leq n$. Hence
$\lambda$ is dominant iff $\lambda_1 \geq \lambda_2 \geq \cdots \geq
\lambda_n$.

In this terminology the alcove $\mathcal{A}_\ell$ is determined by
\begin{equation}
\mathcal{A}_\ell = \{\lambda \in X^+ \mid \lambda_1 - \lambda_n \leq \ell -
n\}.
\end{equation}
In particular $\mathcal{A}_\ell = \emptyset $ if $\ell <n$. So we assume $0\leq
\ell-n=:k$ in the following and call $k$ the {\it level}.

We set $\omega_i = \epsilon_1 + \epsilon_2 + \dots + \epsilon_i$. This
is the $i$-th fundamental weight. The weights of $\nabla_q(\omega_i)$
are $\{ \epsilon_{j_1} +
\epsilon_{j_2} + \cdots + \epsilon_{j_i} \mid 1 \leq j_1 < j_2 <
\cdots < j_i \leq n\}$, all occurring with multiplicity $1$ (the
$\omega_i$'s are minuscule). In particular,
we observe that if $\mu \in \mathcal{A}_\ell$, then $\eta + \mu \in \bar{\mathcal{A}_\ell}$
for all weights $\eta$ of $\nabla_q(\omega_i)$. This means that in the
fusion ring $\mathcal F_q(\mathfrak {gl}_n,\ell)$ for
$U_q(\mathfrak {gl}_n)$ we have (cf. Corollary \ref{Cor13} or
\eqref{eq9}) the Pieri type rule
\begin{equation} [\omega_i] [\mu] = \sum [\epsilon_{j_1} +
\epsilon_{j_2} + \cdots + \epsilon_{j_i} + \mu], {\label {eq10}}
\end{equation}
where the sum runs over those $i$-tuples $1 \leq j_1 < j_2 < \cdots <
j_i \leq n$ for which $\epsilon_{j_1} + \epsilon_{j_2} + \cdots +
\epsilon_{j_i} + \mu \in \mathcal{A}_\ell$.

Let $0 \leq i \leq n-1$ and consider the operator ${\bf a}_i$ on
$\mathcal{A}_\ell$ given by
\begin{equation}
{\bf a}_i (\lambda) = \begin{cases} \lambda + \epsilon _{i+1}& \text {if }  \lambda + \epsilon _{i+1} \in \mathcal{A}_\ell, \\ 
0 &\text  {otherwise.}
 \end{cases}
\end{equation}
We may represent an arbitrary $\lambda = (\lambda_1, \lambda_2, \cdots
, \lambda_n) \in X$ by $n$ rows of boxes in which the $i$-th row is
infinite to the left and stops at the
number $\lambda_i$ (note that $\lambda_i$ may be negative). In this
configuration the rows are non-increasing and there are at most $k$
more boxes
in the first row than in the last. The operator ${\bf a}_0$ adds a box
in the first row if there are less than $k$ more boxes in this row
than in the last. Otherwise, ${\bf a}_0$
kills $\lambda$. Similarly, if $i>0$, then ${\bf a}_i$ adds a box in
the $i+1$'th row if this results in a non-increasing configuration and
maps to $0$ otherwise. The differences $m_i=\lambda_i-\lambda_{i-1}$
encode $\la$ in the basis of fundamental weights, $\la=\sum_i
m_i\omega_i$. Then $\lambda \in \mathcal{A}_\ell$ if and only if $m=\sum_{i=1}
^{n-1}m_i\leq k$. Setting $m_0=k-m$ as in
\cite{KorffStroppel}, weights can be viewed as a configuration of $k$
particles on a circle with $n$ marked points with $m_i$ particles at
place $i$ (viewed as an extended Dynkin diagram of type
$\tilde{A}_{n-1}$) and these operators can be thought of as {\it ``particle hopping" }from one point to the next in clockwise direction. If there
are no
particles at the point from where the operator makes a particle hop,
the operator kills the configuration instead:

\begin{center}
   \begin{tikzpicture}[very thick,scale=2]
      \node[circle,inner
sep=2pt,outer sep=2pt] (z) at (0,0) {$1$};
      \node (n) at (-1,-.2) {$0$};
      \node (o) at (1,-.2) {$2$};
      \draw[->] (n) to[out=20,in=180] (z);
   \draw[->] (z) to[out=0,in=160] (o);
    \draw[->] (-1.7,-.7) to[out=60,in=-160] (n);
  \draw[->] (o) to[out=-20,in=120] (1.7,-.7);
 \node[rotate=120] at (1.9,-.9){$\cdots$} ;
\node[rotate=60] at (-1.85,-.9){$\cdots$ } ;
\node (rz0) at (0,1.1) {};
\node[circle,fill=red,inner sep=2pt,outer sep=3pt] (rz) at (0,.9) {};
\node[circle,fill=red,inner sep=2pt,outer sep=3pt] (rz1) at (0,.7)
{};
\node[circle,fill=red,inner sep=2pt,outer sep=3pt] (rz2) at (0,.5)
{};
\node[circle,fill=red,inner sep=2pt,outer sep=3pt] (rz3) at (0,.3)
{};
\node(ro) at (1.3,.5) {};
\node(a1) at (0.7,.9) {$a_1$};
\node[circle,fill=red,inner sep=2pt,outer sep=3pt] (ro1) at (1.15,.3)
{};
\node[circle,fill=red,inner sep=2pt,outer sep=3pt] (ro2) at (1.1,.1)
{};
\draw[->,dashed] (rz) to (ro) {};
\node[circle,fill=red,inner
sep=2pt,outer sep=3pt] (rn) at (-1.2,.1) {};
\draw[->,dashed] (rn) to (rz0) {};
\node(a0) at (-0.8,.8) {$a_0$};
\end{tikzpicture}
\end{center}
Here the action of ${\bf a}_1$ and ${\bf a}_0$ on
$\la=4\omega_1+2\omega_2$ is illustrated in the case $k = 7$.

Set  ${\underline{\bf a}} = ({\bf a}_0, {\bf a}_1, \cdots , {\bf
a}_{n-1})$ and define the non-commutative elementary symmetric
polynomials
${\bf e}_1({\underline{\bf a}}), {\bf e}_2({\underline{\bf a}}),
\cdots , {\bf e}_{n-1}({\underline{\bf a}})$ as in
\cite{KorffStroppel} by
\begin{equation}
{\bf e}_j({\underline{\bf a}}) = \sum_I \underline {\bf a}_I,
\end{equation}
where $I$ runs through all subsets of $\{0,1, \cdots , n-1\}$
consisting of exactly $j$ elements and $\underline {\bf a}_I$ is the
product
over $I$ of its elements in
anticlockwise cyclical order, see  \cite[\S 5]{KorffStroppel}.
By convention ${\bf e}_j({\underline{\bf a}}) = 0$ if either $j < 0$
or $j > n$ and ${\bf e}_0({\underline{\bf a}}) = {\bf
e}_n({\underline{\bf a}})$ is the identity.
Following \cite[\S6]{KorffStroppel} we define the {\it non-commutative
Schur polynomial}
\begin{equation}
\label{schur}
s_\lambda({\underline{\bf a}}) = \det ({\bf e}_{\lambda^t_i - i +
j}({\underline{\bf a}})),
\end{equation}
where $\la^t$ denotes the partition with $m_i$ rows of length $i$ and then 
the {\it combinatorial fusion ring} $\mathcal F_{comb}(\mathfrak
{gl}_n,\ell)$ as the free $\Z$-module with basis $\{\lambda\}_{\lambda
\in \mathcal{A}_\ell}$ equipped with the following multiplication
\begin{equation}
\label{fusion}
\lambda \star \mu = s_\lambda({\underline{\bf a}}) \mu.
\end{equation}
It is not obvious that the definition
\eqref{schur} makes sense and that \eqref{fusion} is commutative.
However the proof of the following result establishes this implicitly
(see \cite{KorffStroppel} for a completely different proof in that
setup using Bethe Ansatz techniques).

\begin{thm}
\label{thm:A}
The map $\Phi:\mathcal F_q(\mathfrak {gl}_n,\ell) \rightarrow
\mathcal F_{comb}(\mathfrak {gl}_n,\ell)$ taking each basis element
$[\lambda] \in \mathcal F_q(\mathfrak {gl}_n,\ell)$  to the basis
element $\lambda \in \mathcal
F_{comb}(\mathfrak {gl}_n,\ell)$  is a ring isomorphism.
\end{thm}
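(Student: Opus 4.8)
The plan is to exploit that $\Phi$ is already an isomorphism of $\Z$-modules, since both $\mathcal F_q(\mathfrak{gl}_n,\ell)$ and $\mathcal F_{comb}(\mathfrak{gl}_n,\ell)$ are free with basis indexed by the same set $\mathcal A_\ell$. Hence only multiplicativity remains to be checked. As the classes $[\omega_1],\dots,[\omega_n]$ generate $\mathcal F_q$ (the images of the $[L_q(\omega_i)]=[T_q(\omega_i)]$, which generate $K$ as recorded before Proposition \ref{generators}), and since the product is bilinear, it suffices to show two things: that $\Phi$ intertwines left multiplication by each generator $[\omega_i]$ on $\mathcal F_q$ with the action of the non-commutative elementary symmetric operator ${\bf e}_i({\underline{\bf a}})$ on $\mathcal F_{comb}$, and that a general basis vector is assembled from the generators by the \emph{same} determinantal recipe on both sides.

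First I would establish the Pieri-type identity $\Phi([\omega_i][\mu]) = {\bf e}_i({\underline{\bf a}})\,\mu$ for all $\mu\in\mathcal A_\ell$. The left-hand side is computed by \eqref{eq10}, an instance of Corollary \ref{Cor13} valid because each $\omega_i$ is minuscule: it is the sum of the classes $[\epsilon_{j_1}+\dots+\epsilon_{j_i}+\mu]$ over all $i$-subsets $\{j_1<\dots<j_i\}\subseteq\{1,\dots,n\}$ for which the shifted weight lies in $\mathcal A_\ell$. The right-hand side is $\sum_{I}{\underline{\bf a}}_I\mu$ over $i$-subsets $I\subseteq\{0,\dots,n-1\}$, and the reindexing $j_\ell=(\text{$\ell$-th element of }I)+1$ matches the two index sets. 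Thus the identity reduces to the combinatorial lemma that ${\underline{\bf a}}_I\mu$ equals $\mu+\sum_{j\in I}\epsilon_{j+1}$ when this weight lies in $\mathcal A_\ell$ and equals $0$ otherwise. This is the main obstacle: a single hop ${\bf a}_j$ may annihilate the configuration, either because it breaks the non-increasing (dominance) condition or because it violates the alcove bound $\lambda_1-\lambda_n\le k$, so one must check that applying the ${\bf a}_j$, $j\in I$, in the prescribed anticlockwise cyclical order survives \emph{precisely} when the total shift remains dominant and inside $\mathcal A_\ell$. I expect this to follow by induction on $|I|$, tracking the effect of each hop on the particle data $(m_0,\dots,m_{n-1})$ and verifying that the cyclic ordering is exactly what avoids spurious annihilation at intermediate steps, as in \cite{KorffStroppel}.

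The Pieri identity shows that $\Phi$ carries left multiplication by $[\omega_i]$ to ${\bf e}_i({\underline{\bf a}})$; since $\mathcal F_q$ is commutative, the operators ${\bf e}_i({\underline{\bf a}})$ must then commute with one another, which simultaneously justifies that the determinant \eqref{schur} is well defined and that $\star$ is commutative (the point flagged before the statement). It remains to express a general basis vector through the generators compatibly. In the character ring $\Z[X]^W$ one has the dual Jacobi--Trudi identity $\chi(\lambda)=\det\!\big(\chi(\omega_{\lambda^t_i-i+j})\big)$, with $\chi(\omega_k)$ the elementary symmetric functions; for $\lambda\in\mathcal A_\ell$ we have $T_q(\lambda)=\nabla_q(\lambda)$ by \eqref{eq7}, so that under the quotient map $\Z[X]^W\to\mathcal F_q$ sending $\chi(\nu)\mapsto[\nu]$ this descends to $[\lambda]=\det\!\big([\omega_{\lambda^t_i-i+j}]\big)$ in $\mathcal F_q$, where the conventions on $[\omega_0],[\omega_n]$ and the vanishing for indices outside $[0,n]$ match those imposed on the ${\bf e}_j({\underline{\bf a}})$. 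Combining the three steps, for $\mu\in\mathcal A_\ell$ one gets $\Phi([\lambda][\mu]) = \det\!\big({\bf e}_{\lambda^t_i-i+j}({\underline{\bf a}})\big)\mu = s_\lambda({\underline{\bf a}})\mu = \lambda\star\mu$: the first equality by expanding the determinant and applying the Pieri step repeatedly together with commutativity (and $\Z$-linearity of $\Phi$), the second by definition \eqref{schur}, and the third by \eqref{fusion}. Hence $\Phi$ is a ring homomorphism, and being bijective it is an isomorphism; in particular the trivial weight $0\in\mathcal A_\ell$ is carried to the unit, consistently with \eqref{eq9}.
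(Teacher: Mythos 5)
Your argument is correct, and it shares its essential first half with the paper's proof: both establish the Pieri-type identity $\Phi([\omega_i][\mu])={\bf e}_i({\underline{\bf a}})\mu$ by matching \eqref{eq10} with the cyclically ordered hopping products, and both deduce from the commutativity of $\mathcal F_q$ that the operators ${\bf e}_i({\underline{\bf a}})$ commute, so that the determinant \eqref{schur} is well defined. (You are in fact more explicit than the paper about the one genuinely combinatorial point here, namely that ${\underline{\bf a}}_I$ applied to $\mu$ survives precisely when $\mu+\sum_{j\in I}\epsilon_{j+1}\in\mathcal{A}_\ell$ and is not killed at an intermediate step; the paper simply asserts that ${\bf e}_i({\underline{\bf a}})$ ``operates by a formula identical to \eqref{eq10}'', implicitly deferring to \cite{KorffStroppel}, whereas you flag it and sketch the induction on $|I|$.) Where you genuinely diverge is in passing from the fundamental weights to a general $[\lambda]$: the paper runs an induction on $m=\lambda_1-\lambda_n$, writing $\lambda=\lambda'+\omega_i$ and using the Pieri expansion \eqref{eq11} to peel off one fundamental weight at a time, so the determinantal formula is never invoked as an external identity but is reconstructed step by step inside the fusion ring. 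You instead import the dual Jacobi--Trudi identity from the character ring $\Z[X]^W\simeq K$ and push it through the quotient map to $\mathcal F_q$ using \eqref{eq7}, obtaining $[\lambda]=\det([\omega_{\lambda^t_i-i+j}])$ in one stroke and then concluding by expanding the determinant against the Pieri step. Your route makes it transparent why the definition \eqref{schur} is the right one (it is literally the image of Nägelsbach--Kostka), at the cost of having to be slightly careful with the conventions for indices outside $[0,n]$ and with dominant weights that are not partitions (one must reduce modulo $\epsilon$, consistent with the paper's definition of $\lambda^t$ via the $m_i$); the paper's induction avoids stating Jacobi--Trudi explicitly but instead relies on the formal Pieri rule for the commuting operators ${\bf e}_i({\underline{\bf a}})$ to see that $\lambda'\star\omega_i=\lambda+\sum_\eta\eta$ as needed in the last step. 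Both are complete modulo the same combinatorial lemma, and either organization is acceptable.
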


\begin{proof}[{\bf Proof:}]
We shall prove that $\Phi ([\lambda][\mu]) = \lambda \star \mu$ by
induction on $m = \lambda_1 - \lambda_n$. This is clear if $m = 0$. If
$m = 1$, then, up to a multiple
of $\epsilon_1 + \epsilon_2 + \cdots + \epsilon_n$, we have $\lambda
=
\omega_i$ for some $i$. In that case $s_{\omega_i}({\underline{\bf
a}}) = {\bf e}_i({\underline{\bf a}})$ operates by a formula identical
to \eqref{eq10}.
In particular, the operators ${\bf e}_i({\underline{\bf a}})$ commute,
since ${\bf e}_i({\underline{\bf a}})$ corresponds to multiplication
by $[\omega_i]$ in the commutative
fusion ring $\mathcal F_q(\mathfrak {gl}_n,\ell)$. Therefore it makes
sense to form the determinant in \eqref{schur}.

So suppose $m > 1$. We may write
$\lambda = \lambda' + \omega_i$ for some $i$ and some $\lambda'\in
\mathcal{A}_\ell$. Then we have
\begin{equation}
[\lambda'][\omega_i] = [\lambda] + \sum_{\eta} [\eta] {\label {eq11}}
\end{equation}
with the sum running over certain $\eta \in \mathcal{A}_\ell$ with $\eta_1 -
\eta_n < m$. Hence, by the above combined with the induction hypothesis,
we get
\begin{eqnarray*}
&\Phi([\lambda] [\mu])=\Phi( [\lambda'][\omega_i][\mu]) - \sum_\eta
\Phi([\eta][\mu])&\\
&= \Phi([\lambda'])\Phi([\omega_i][\mu]) - \sum_\eta \eta \star \mu
= \lambda' \star \omega_i \star \mu - \sum_\eta \eta \star \mu =
\lambda \star \mu.&
\end{eqnarray*}
Here the last equality comes from the fact that $\lambda' \star
\omega_i = \lambda +
\sum_\eta \eta$ where the sum ranges over the same $\eta$'s as in
\eqref{eq11}.
\end{proof}

\subsection{Fusion algebras for $\mathfrak {sl}_n$}
Preserving the notation from above we set $\epsilon = \epsilon_1 +
\epsilon_2 + \cdots + \epsilon_n$ and write $X_0 = \{ \lambda \in X
\mid \lambda_n = 0\}$. Then any $\lambda \in
X$ equals a unique element in $X_0$ modulo a multiple of $\epsilon$.
We set $X_0^+ = X_0 \cap X^+$ and $A_{0,\ell} = X_0 \cap \mathcal{A}_\ell$. Note
that $A_{0,\ell}$ is finite.

Restriction from $\mathfrak  {gl}_n$ to $\mathfrak  {sl}_n$ gives a
natural surjection from the fusion ring $\mathcal F_q(\mathfrak
{gl}_n,\ell)$ to the
corresponding fusion ring $\overline{\mathcal F_q(\mathfrak
{gl}_n,\ell)}=\mathcal F_q(\mathfrak {sl}_n,\ell)$ for $\mathfrak
{sl}_n$. Similarly, we get a surjection from
${\mathcal F_{comb}(n, \ell)}$ to the corresponding
construction $\overline {\mathcal F_{comb}(\mathfrak {gl}_n,\ell)}$
for $A_{0,\ell}$. We obtain

\begin{thm}
We have ring isomorphisms $$\overline{\mathcal F_q(\mathfrak
{gl}_n,\ell)} \simeq \mathcal F_q (\mathfrak {gl}_n,\ell)/([\epsilon]
-1)$$ and
$$\overline {\mathcal F_{comb}(\mathfrak {gl}_n,\ell)} \simeq {\mathcal
F_{comb}(\mathfrak {gl}_n,\ell)}/(\epsilon -1),$$ so that the
isomorphism $\Phi$ from Theorem \ref{thm:A} induces an isomorphism
$$\overline{\mathcal F_q(\mathfrak {gl}_n,\ell)}
\simeq \overline {\mathcal F_{comb}(\mathfrak {gl}_n,\ell)}.$$
\end{thm}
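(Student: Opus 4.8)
The plan is to establish the two displayed quotient isomorphisms separately and then observe that the ring map $\Phi$ from Theorem \ref{thm:A} descends to the quotients. The key structural fact I would exploit is that both rings $\mathcal F_q(\mathfrak{gl}_n,\ell)$ and $\mathcal F_{comb}(\mathfrak{gl}_n,\ell)$ carry a grading (or at least a compatible action) by the central element $\epsilon = \epsilon_1 + \cdots + \epsilon_n$, and that passing from $\mathfrak{gl}_n$ to $\mathfrak{sl}_n$ amounts precisely to identifying weights that differ by a multiple of $\epsilon$. Since $\Phi$ is already a ring isomorphism intertwining the two structures, the only real content is to identify the kernels of the two surjections onto the barred rings with the principal ideals $([\epsilon]-1)$ and $(\epsilon - 1)$.

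First I would treat the $\mathfrak{gl}_n$-side. The weight $\epsilon$ is the highest weight of the one-dimensional determinant module, and $L_q(\epsilon)=T_q(\epsilon)$ is invertible in $K$ (its class is a unit), so multiplication by $[\epsilon]$ is a bijection on $\mathcal F_q(\mathfrak{gl}_n,\ell)$ permuting the basis $\{[\lambda]\}_{\lambda\in\mathcal A_\ell}$ via $[\lambda]\mapsto[\lambda+\epsilon]$. The restriction functor to $U_q(\mathfrak{sl}_n)$ sends $L_q(\epsilon)$ to the trivial module, hence kills $[\epsilon]-1$; so the surjection $\mathcal F_q(\mathfrak{gl}_n,\ell)\to\overline{\mathcal F_q(\mathfrak{gl}_n,\ell)}$ factors through the quotient by $([\epsilon]-1)$. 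To see this factorization is an isomorphism, I would use that $A_{0,\ell}=X_0\cap\mathcal A_\ell$ is a set of coset representatives for $\mathcal A_\ell$ modulo $\Z\epsilon$: every $[\lambda]$ is congruent modulo $([\epsilon]-1)$ to a unique $[\lambda_0]$ with $\lambda_0\in A_{0,\ell}$, so the images of $\{[\lambda_0]\}_{\lambda_0\in A_{0,\ell}}$ form a $\Z$-basis of the quotient, matching the basis of the barred ring. The identical argument with $\epsilon$ in place of $[\epsilon]$ handles the combinatorial ring, since the definition of $\star$ via the operators $\underline{\bf a}$ is manifestly equivariant for the shift $\lambda\mapsto\lambda+\epsilon$.

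Finally, since $\Phi([\epsilon]) = \epsilon$ by construction (both are the class of the weight $\epsilon$, which lies in $\mathcal A_\ell$ under our standing hypothesis), $\Phi$ carries the ideal $([\epsilon]-1)$ isomorphically onto $(\epsilon-1)$. A ring isomorphism carrying one ideal onto another induces an isomorphism of the quotient rings, giving the last displayed isomorphism $\overline{\mathcal F_q(\mathfrak{gl}_n,\ell)}\simeq\overline{\mathcal F_{comb}(\mathfrak{gl}_n,\ell)}$, and the diagram of surjections commutes by naturality.

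The step I expect to be the main obstacle is verifying carefully that $A_{0,\ell}$ really is a transversal for the $\Z\epsilon$-action on $\mathcal A_\ell$ and that reduction modulo $([\epsilon]-1)$ sends the $\mathfrak{gl}_n$-basis bijectively onto it. The alcove $\mathcal A_\ell$ is defined by $\lambda_1-\lambda_n\le k$, a condition invariant under adding $\epsilon$, so each $\Z\epsilon$-orbit meets $\mathcal A_\ell$ in an infinite set and $X_0$ in exactly one point; I would need to confirm that normalizing $\lambda_n=0$ keeps us inside $\mathcal A_\ell$, i.e. that $A_{0,\ell}$ is nonempty in each orbit and that no collisions or losses occur in the reduction — essentially a bookkeeping check that the freeness of the quotient $\Z$-module is preserved. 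Everything else is formal once this transversal statement is in hand.
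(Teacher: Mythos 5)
Your argument is correct and is exactly the verification the paper leaves implicit: the theorem is stated with no proof (it follows ``We obtain''), the intended content being precisely that restriction to $\mathfrak{sl}_n$ kills $[\epsilon]-1$, that $[\epsilon]$ acts on the basis by the shift $[\lambda]\mapsto[\lambda+\epsilon]$ (so the quotient by $([\epsilon]-1)$ is free on the $\Z\epsilon$-orbits, for which $A_{0,\ell}$ is a transversal), and that $\Phi([\epsilon])=\epsilon$ transports one ideal onto the other. Your transversal check is the only point needing care and you handle it correctly, so nothing is missing.
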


\subsubsection{$\ell$ even}
In case $\ell$ is even we set $\ell' = \ell/2$. Since the roots in
type $A$ have the same length the only change we have to make when
describing the fusion rules in this
case is to replace $\ell$ by $\ell'$, see \cite{AP}. We can then argue
exactly as before to see that all the above statements remain true.
Note in
particular that we need $\ell \geq 2n$ now as otherwise the
fundamental alcove $A_{\ell'}$ is empty.

\subsection{Commutative presentation}
The following connects our approach with well-known presentations of the fusion ring, \cite{BK}, \cite{BR}, \cite{Dou}.

\begin{thm}
There are isomorphisms of commutative rings
\begin{eqnarray*}
\mathcal F_{comb}(\mathfrak {gl}_n,\ell)&\simeq&\mZ[\chi({\omega_1}),\ldots,\chi({\omega_{n-1}})]/I\\
&\simeq& \mZ[\chi({\omega_1}),\ldots,\chi({\omega_{n-1}})]/J
\end{eqnarray*}
with $I=\langle \chi(s\omega_1)\mid k+1 \leq s < k+n \rangle$, $J=\langle \chi({k\omega_1+\omega_i})\mid 1\leq i\leq n-1 \rangle$.
\end{thm}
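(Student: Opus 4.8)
The plan is to transport the statement into the character ring $\Z[X]^W$ and reduce it to two Jacobi--Trudi identities. By Theorem~\ref{thm:A} and the character isomorphism $K=\Gr(\mathcal T_q)\xrightarrow{\ \sim\ }\Z[X]^W$ (a ring map, since $[T_q(\mu)]=\chi(\mu)+\sum_{\nu<\mu}\ast\,\chi(\nu)$ is unitriangular), it suffices to work inside $\Z[X]^W$ and to prove that the image $\mathcal I$ of the fusion ideal equals both $I$ and $J$. Under this identification $\Z[X]^W$ is a ring of symmetric functions, with $\chi(\omega_i)=e_i$ (elementary), $\chi(s\omega_1)=h_s$ (complete) and $\chi(\lambda)=s_\lambda$ (Schur); I treat the determinant $\chi(\omega_n)=e_n$ as the invertible ``level'' variable, so that a presentation in the $n-1$ generators $\chi(\omega_1),\dots,\chi(\omega_{n-1})$ is the one obtained after normalising $e_n$, i.e.\ the passage to the $\mathfrak{sl}_n$-picture of the preceding theorem. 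As the $\chi(\omega_i)$ generate $K$, there is a surjection onto $\mathcal F_q$ and the theorem is precisely the computation of its kernel.

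First I would fix a generating set for $\mathcal I$. By Proposition~\ref{generators} and the Remark following it, in type $A$ the ideal $\mathcal N_q$ is generated by the $T_q(\mu)$ with $\mu$ on the upper wall of $\mathcal A_\ell$, that is $\mu\in X_0^+$ with $\mu_1=k+1$. For such $\mu$ one has $\langle\mu+\rho,\alpha_0^\vee\rangle=(k+1)+(n-1)=\ell$, so $\mu\in\bar{\mathcal A_\ell}$ and \eqref{eq7} gives $T_q(\mu)=\nabla_q(\mu)$, hence $[T_q(\mu)]=\chi(\mu)=s_\mu$. Thus $\mathcal I$ is generated by the Schur functions $s_\mu$ with $\mu_1=k+1$ and at most $n-1$ parts.

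Next I would treat $J$ and the equality $I=J$ via the hook expansion
\[
\chi(k\omega_1+\omega_i)=s_{(k+1,1^{i-1})}=\sum_{j=0}^{i-1}(-1)^j\,h_{k+1+j}\,e_{i-1-j},\qquad 1\le i\le n-1.
\]
Every index $k+1+j$ here lies in $\{k+1,\dots,k+n-1\}$, so this shows $J\subseteq I$; conversely the expansion is triangular with leading term $(-1)^{i-1}h_{k+i}$, so an induction on $i$ starting from $s_{(k+1)}=h_{k+1}$ expresses each $h_{k+i}$ through the $s_{(k+1,1^{i'-1})}$ with $i'\le i$, giving $I\subseteq J$ and hence $I=J$. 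Since each hook $(k+1,1^{i-1})$ has first part $k+1$, it is among the generators found above, so $J\subseteq\mathcal I$ and therefore $I=J\subseteq\mathcal I$.

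The decisive inclusion is $\mathcal I\subseteq I$, and here the ordinary Jacobi--Trudi formula does all the work: for a generator $s_\mu$ with $\mu_1=k+1$ and at most $n-1$ parts,
\[
s_\mu=\det\bigl(h_{\mu_i-i+j}\bigr)_{1\le i,j\le n-1},
\]
whose first row is $(h_{\mu_1},h_{\mu_1+1},\dots,h_{\mu_1+n-2})=(h_{k+1},\dots,h_{k+n-1})$, consisting entirely of generators of $I$. Cofactor expansion along this row writes $s_\mu$ as a combination of $h_{k+1},\dots,h_{k+n-1}$ with coefficients in $\Z[X]^W$, so $s_\mu\in I$; as these $s_\mu$ generate $\mathcal I$ we get $\mathcal I\subseteq I$, and combining with the previous step, $\mathcal I=I=J$. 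The main obstacle is not a hard computation but getting the two structural inputs exactly right: the description of the minimal generators of $\mathcal N_q$ from Proposition~\ref{generators} (so that $\mathcal I$ is generated \emph{precisely} by the wall Schur functions $s_\mu$, $\mu_1=k+1$), and the normalisation of $\chi(\omega_n)$ reconciling the $\mathfrak{gl}_n$ fusion ring with a presentation in $n-1$ variables. As a check, $\Z[\chi(\omega_1),\dots,\chi(\omega_{n-1})]/I$ then has rank $\binom{k+n-1}{n-1}=|\mathcal A_\ell\cap X_0|$, the size of a $\Z$-basis of $\mathcal F_q$.
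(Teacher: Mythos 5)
Your proposal is correct and follows essentially the same skeleton as the paper's proof: identify the generators of the fusion ideal via Proposition \ref{generators} as the wall characters $\chi(\lambda)$ with $\sum_{i=1}^{n-1}m_i=k+1$, use the Jacobi--Trudi determinant expanded along its first row $(h_{k+1},\dots,h_{k+n-1})$ to get the inclusion into $I$, and use Pieri-type identities to identify $I$ with $J$ (your closed-form hook expansion $s_{(k+1,1^{i-1})}=\sum_j(-1)^jh_{k+1+j}e_{i-1-j}$ is just the paper's telescoping chain of Pieri identities summed up). The one genuine divergence is the reverse inclusion $I\subseteq\mathcal I$: the paper establishes it by invoking Thams' theorem that $T_q(s\omega_1)=\nabla_q(s\omega_1)$ for all $s$ in the range $k+1\le s<k+n$ (needed because $s\omega_1\notin\bar{\mathcal A_\ell}$ for $s>k+1$, so \eqref{eq7} does not apply), whereas you deduce it from $I=J$ together with the observation that every hook $k\omega_1+\omega_i$ lies on the wall, hence in $\bar{\mathcal A_\ell}$, so $\chi(k\omega_1+\omega_i)=[T_q(k\omega_1+\omega_i)]$ is among the Proposition \ref{generators} generators. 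Your route avoids the external citation and is self-contained; the paper's is more direct but leans on a nontrivial fact about quantum symmetric powers. Your remark normalising $\chi(\omega_n)$ to pass to the $n-1$-variable presentation correctly resolves an imprecision the paper leaves implicit.
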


\begin{proof}
The generators given by Proposition \ref{generators} of the defining
ideal are the $\chi(\lambda)$'s with $\lambda = \sum_{i=1}^{n-1} m_i \omega_i$
satisfying $\sum_{i=1}^{n-1} m_i = k+1$. For such $\lambda$'s the Jacobi-Trudy
identity \cite[(A.5)]{FH} expresses $\chi(\lambda)$ as the determinant of
a matrix whose first row contains entries equal to the complete symmetric
polynomials $\chi((k+1)\omega_1), \cdots , \chi((k+n-1)\omega_1)$. Hence expanding
the determinant along this row we obtain $\chi(\lambda)$ as a
$\Z[\chi(\omega_1), \cdots , \chi(\omega_{n-1})]$-linear combination of
$\chi(s\omega_1), s= k+1, \cdots , k+n-1$. This proves that our defining ideal is
contained in the ideal $I$. The reversed inclusion is clear once we observe that for $s$ in the given range we have
$L_q(s\omega_1) = \nabla_q(s\omega_1) = T_q(s\omega_1)$, see \cite[Theorem 3.5] {T}.
For the induction step we use the Pieri formula \cite[(A.7)]{FH}
\begin{eqnarray*}
\chi(r\omega_1) \chi(\omega_i) &= &\chi(r\omega_1 + \omega_i) + \chi((r-1)\omega_1 + \omega_{i+1})
\end{eqnarray*}
for $r\geq 0$ and $1\leq i\leq n$. This gives for $k+1\leq s\leq k+n-2$
\begin{eqnarray*}
\chi(s\omega_1)\chi(\omega_1) &= &\chi((s+1)\omega_1) + \chi((s-1)\omega_1 + \omega_{2})\\
\chi((s-1)\omega_1)\chi(\omega_2) &= &\chi((s-1)\omega_1+\omega_2) + \chi((s-2)\omega_1 + \omega_{3})\\
&\vdots&\\
\chi((k+1)\omega_1)\chi(\omega_{s-k}) &= &\chi((k+1)\omega_1+\omega_{s-k}) + \chi(k\omega_1 + \omega_{s-k+1}).
\end{eqnarray*}
Reading from bottom to top we see that the outer terms are in $J$, hence the middle term as well and the claim follows.
\end{proof}

\section{Three descriptions in Type $C$}
In this section we assume that $\mathfrak g = \mathfrak{sp}_{2n}$.
\subsection{The classical case}
We still have $X = \Z^n$ with basis $\epsilon_1, \epsilon_2, \cdots ,
\epsilon_n$. The positive roots are now
$\{\epsilon_i - \epsilon_j, \epsilon_i + \epsilon_j \mid 1 \leq i < j
\leq n\} \cup \{2\epsilon_i \mid 1 \leq i \leq n\}$ and the simple
roots are $\alpha_i = \epsilon_i -
\epsilon_{i+1}, i = 1, \cdots , n-1$ together with $\alpha_n = 2
\epsilon_n$. The corresponding fundamental weights are $\omega_i =
\epsilon_1 + \epsilon_2 + \cdots + \epsilon_i$,
$i = 1, \cdots , n$. A weight $\lambda \in X$ can be expressed in
both the $\epsilon$-basis, say $\lambda = \sum_{i=1}^n \lambda_i
\epsilon_i$, and in the $\omega$-basis, say $\lambda =
\sum_{i=1}^n m_i\omega_i$. The dictionary between the $\lambda_i$'s
and the $m_i$'s is $m_i = \lambda_i - \lambda_{i+1}$, $i = 1, \cdots ,
n$ (with the convention that
$\lambda_{n+1} = 0$) and $\lambda_i = m_i + m_{i+1} + \cdots + m_n$.
We have that $\lambda$ is dominant iff $\lambda_1 \geq \lambda_2 \geq
\cdots \geq \lambda_n \geq 0$ iff $m_i
\geq 0$ for all $i$.

The highest short and and the highest long root are respectively
\begin{eqnarray*}
\alpha_0&=&\alpha_1 + 2 \alpha_2 + 2 \alpha_3 + \cdots + 2
\alpha_{n-1} + \alpha_n = \epsilon_1 + \epsilon_2\\
\beta_0&=&2\alpha_1 + 2 \alpha_2 + \cdots 2 \alpha_{n-1} +\alpha_n = 2
\epsilon_1.
\end{eqnarray*}
We have $\langle \omega_1, \alpha_0^\vee\rangle = 1$ whereas $\langle
\omega_i, \alpha_0^\vee\rangle = 2$
for $i>1$. On the other hand, $\langle \omega_i, \beta_0^\vee\rangle =
1$ for all $i$. Hence $\langle \rho, \alpha_0^\vee \rangle = 2n-1$ and
$\langle \rho, \beta_0^\vee \rangle =
n$.

Let now $V$ denote the natural $2n$-dimensional module for
$\mathfrak{sp}_{2n}(\C)$ with its standard basis $v_1, \cdots ,
v_{2n}$. Setting $\epsilon_{n+i} = -\epsilon_i, \;
i= 1, 2, \cdots , n$ we
have that $v_i$ has weight $\epsilon_i$, $i= 1, 2, \cdots , 2n$.

Letting $L_\C(\lambda)$ denote the irreducible $\mathfrak
{sp}_{2n}(\C)$-module with highest weight $\lambda \in X^+$ and let
$\chi (\la)$ be its character. We have clearly $L_\C(0) = \C$ and
$L_\C(\omega_1) = V$. If
$i>1$, then
$L_\C(\omega_i)$ fits into a short exact sequence
\begin{equation}
\label{ses}
 0 \to L_\C(\omega_i) \to \Lambda^iV \stackrel{f}\to \Lambda^{i-2}V
 \to 0,
\end{equation}
where $f$ comes from the symplectic form on $V$, \cite[Theorem
17.5]{FH}.

In analogy with the $\mathfrak{gl}_n$-case in Section \ref{sec:gl},
the weights of $\Lambda^i V$ are $\epsilon_J = \epsilon_{j_1} +
\epsilon_{j_2} + \cdots + \epsilon_{j_i}$ with $J = j_1 < j_2 < \cdots
< j_i$ running
through all increasing sequences in $\{1, 2, \cdots , 2n\}$ consisting
of $i$ elements. Note however, that since $\epsilon_{n+i} =
-\epsilon_i$ two different such sequences may
well lead to the same weight. For instance when $i=2$
the zero weight equals $\epsilon_{J_i}$ for all $J_i = \{i, n+i\}$,
$i= 1, 2, \cdots , n$ and so also $\epsilon_J = \epsilon_{J \cup J_i}$
whenever $J \cap J_i = \emptyset$.

\subsection{The quantum case}

Let now $U_q = U_q(\mathfrak {sp}_{2n})$ denote the quantum group for
$\mathfrak {sp}_{2n}$ at a complex root of unity $q$. We denote by
$\ell$ the order of $q$ and assume $\ell > 2n$. When $\ell$ is odd we have
\begin{eqnarray*}
\mathcal{A}_\ell &=& \{\lambda \in X^+ \mid \langle \lambda + \rho,
\alpha_0^\vee\rangle < \ell \} \\
&=& \{\lambda \in X^+ \mid m_1 + 2m_2 + 2 m_3 + \cdots + 2m_n <
\ell-2n+1\}
\end{eqnarray*}
In the case when $\ell$ is even we set  $\ell' = \ell/2$ and have
\begin{eqnarray*}
\mathcal{A}_\ell &=&
\{\lambda \in X^+ \mid \langle \lambda + \rho, \beta_0^\vee\rangle <
\ell' \}\\
&=& \{\lambda \in X^+ \mid m_1 + m_2+ \cdots + m_n < \ell'-n\}.
\end{eqnarray*}
In either case we have $\mathcal{A}_\ell \neq \emptyset$ is equivalent to our
assumption $\ell > 2n$.

Recall that for each $\lambda \in X^+$ the dual Weyl module
$\nabla_q(\lambda)$ is a module for $U_q$ whose weights are the same
as for the classical module $L_{\C}(\lambda)$. So if
for $i \leq n$ we set
\begin{equation}
V_q^i =
\begin{cases}
\;\nabla_q(0) \oplus \nabla_q(\omega_2) \oplus \cdots \oplus
\nabla_q(\omega_{i}) &\text { if $i$ is even};\\
\nabla_q(\omega_1) \oplus \nabla_q(\omega_3) \oplus \cdots \oplus
\nabla_q(\omega_{i}) &\text { if $i$ is odd};
\end{cases}
\end{equation}
then by \eqref{ses} $V_q^i$ has the same weights as $\Lambda^iV$.

Our assumptions on $\ell$ and $\ell'$ ensure that all the $\nabla_q
(\omega_i)$ are irreducible tilting modules because all $\omega_i$
belong to the closure of $\mathcal{A}_\ell$. In the following
we write therefore $L_q(\omega_i)$ instead of $\nabla_q(\omega_i)$.
The $V_q^i$ are therefore also tilting modules and formula \eqref{eq9}
gives
\begin{equation}
[V_q^i] [\mu] = \sum_{|J| = i} [\mu + \epsilon_J],
\end{equation}
where the $J$'s appearing in the sum are subsets of $\{1, 2, \cdots ,
2n\}$.
The formulas \eqref{ses} then imply again Pieri rules
\begin{equation}
\label{fusionC}
[\omega_i][\mu] = \begin{cases} \sum_{j=1}^{2n} [\mu + \epsilon_j],
&\text { if } i = 1, \\
\sum_{|J| = i} [\mu + \epsilon_J] - \sum_{|J| = i-2} [\mu +
\epsilon_J],& \text { if } i \geq
2.
\end{cases}
\end{equation}

\subsection{Combinatorics}

We shall now describe the product formulas \eqref{fusionC} combinatorially, in 
analogy with the type $A$ case considered in Section \ref{sec:gl}.
This will then allow us to deduce the general fusion rule in
terms of combinatorially described operators.

Consider the free $\Z$-modules $\Z[X]$ and $\Z[\mathcal{A}_\ell]$ with bases
$e^\la$ for  $\la\in X$ and $\la \in \mathcal{A}_\ell$, respectively.
We define a $\Z$-linear map $\pi_\ell : \Z[X] \to \Z[\mathcal{A}_\ell]$ by the
recipe
\begin{eqnarray*}
\pi_\ell(e^\lambda)& = &\begin{cases} (-1)^{l(w)} e^{w \cdot \lambda},
&\text { if there exists } w \in W_\ell \text { with }  w\cdot \lambda
\in \mathcal{A}_\ell, \\
0, &\text { otherwise.}
\end{cases}
\end{eqnarray*}
For $j \in \{1, 2, \cdots , 2n\}$ we define $\Z$-linear endomorphisms
${\bf a}_j$ of $\Z[X]$ by
\begin{equation}
{\bf a}_j(e^\lambda) = e^{\lambda + \epsilon_j},\; \lambda \in X.
\end{equation}
For each subset $J = \{j_1 < j_2 < \cdots <j_i\}  \subset \{1, 2, \cdots ,
2n\}$ these operators give rise to the operator ${\bf a}_J = {\bf
a}_{j_1} \circ {\bf a}_{j_2} \circ \cdots \circ {\bf a}_{j_i}$ on
$\Z[X]$. Then for any $i \leq n$ we define ${\bf e}_i : \Z[\mathcal{A}_\ell] \to
\Z[\mathcal{A}_\ell]$ by
\begin{equation}
\label{Defe}
{{\bf e}_i} (e^\lambda) = \sum_{|J| = i} \pi_\ell({\bf
a}_J(e^\lambda)),\; \lambda \in \mathcal{A}_\ell.
\end{equation}

\subsubsection{Case $\ell$ even}
Let $\ell$ be even. If $\lambda = \sum_{i=1}^n m_i \omega_i \in
X$ we set $m_0 = \ell'-n-1-\sum_{i=1}^n m_i$ and we identify $\lambda$
with the particle
configuration on the extended Dynkin diagram of type $\tilde{C}_{n}$ with $m_i$ particles if $m_i \geq 0$, respectively
$-m_i$ antiparticles if $m_i <0$, placed at node $i$, $i = 0, 1,
\cdots , n$. Counted with signs ($+$ for
particles and $-$ for antiparticles) we have a total
of $k:=\ell'-n-1$ particles.  This number $k$ is preserved by the
above operators ${\bf a}_i$ and called the {\it level}. For $ 1 \leq i
\leq n$
we may describe ${\bf a}_i$  as particle hopping from node $i-1$ to
node $i$. If there are no particles at node $i-1$, then we first add a
pair consisting of a particle and
an antiparticle to this node and then
make this particle hop. If at node $i$ we have only antiparticles, then the
particle added to this node annihilates one of the antiparticles. The
inverse ${\bf a}_{n+i}$ is hopping in the reverse direction:
\begin{equation}\label{hoppingCeven}
\xymatrix{\stackrel{0}\bullet
\ar@/^/[r]^{{\bf a}_1}&\stackrel{1}\bullet\ar@/^/[r]^{{\bf a}_2}
\ar@/^/[l]^{{\bf a}_{n+1}} & \stackrel{2}\bullet \ar@/^/[r]^{{\bf
a}_3}
\ar@/^/[l]^{{\bf a}_{n+2}}&\stackrel{3}\bullet
\ar@/^/[l]^{{\bf a}_{n+3}}& \cdots & \stackrel{n-1}\bullet
\ar@/^/[r]^{{\bf a}_n} & \stackrel{n}\bullet
\ar@/^/[l]^{{\bf a}_{2n}}}
\end{equation}

Let $\lambda \in \mathcal{A}_\ell$. This means that the particle configuration
corresponding to $\lambda$ contains no antiparticles. There are
$k=l'-n-1$  particles in this
configuration. Let $J$ be a subset of $\{1, 2, \cdots , 2n\}$ with
$|J| \leq n$. Note that $\langle \epsilon_J, \alpha_i^{\vee} \rangle
\in \{0, \pm 1, \pm 2\}$ for all $i
= 1, \cdots , n-1$ whereas  $\langle \epsilon_J, \alpha_n^{\vee}
\rangle,  \langle \epsilon_J, \beta_0^{\vee} \rangle\in \{0, \pm 1\}$.
Hence  $\lambda + \epsilon_J$ contains at most $2$ antiparticles at a
given node. This node cannot be an end node ($0$ or $n$) and if node
$i$ contains $2$ antiparticles,
then no antiparticles are positioned at the adjacent nodes $i-1$ and
$i+1$. When we apply $\pi_\ell$ we get $0$ if there is exactly $1$
antiparticle at some node, because then
the weight is $\ell$-singular. If there
are $2$ antiparticles at node $i$, then we replace the configuration by
minus the one where we have removed $1$ particle from each of the two
nodes $i-1$ and $i+1$ and placed both
of them at
node $i$ (thus annihilating the $2$ antiparticles there). If the
resulting configuration still contains antiparticles, we repeat the
above process. The end result corresponds then
to $\pi_\ell(e^{\lambda + \epsilon_J})$. Note that there are at most
as many annihilation steps as there are nodes with $2$ antiparticles.

The action of $W_\ell$ which is applied to remove 2 antiparticles can
be visualized on the extended Dynkin diagram of type $\tilde{C}_{n}$:
\begin{equation}\label{WeylCaffine}
\xymatrix{\stackrel{0}\bullet
\ar@{=}[r]|{\rangle}&
\stackrel{1}\bullet
\ar@{-}[r]&
\stackrel{2}\bullet
\ar@{-}[r]&
\stackrel{3}\bullet
&
\cdots&
\stackrel{n-1}\bullet
\ar@{=}[r]|{\langle}&
\stackrel{n}\bullet}
\end{equation}
Note that particles are always taken from neighboring vertices, one for each
edge; for the special vertices $0$ and $n$ two particles get moved
along a double edge against the direction of the arrow.
\subsubsection{Case $\ell$ odd}
Suppose that $\ell$ is odd. If  $\lambda = \sum_{i=1}^n
m_i\omega_i \in X$, we set this time $m_0 = \ell-2n - m_1 -2m_2 -2m_3 -
\cdots - 2m_n$.
We may again interpret
$\lambda$ as a configuration of particles and antiparticles placed at
the nodes $0, 1, \cdots, n$. If $m_i \geq 0$, there are $m_i$
particles at node $i$ and
if $m_i <0$, there are $-m_i$ antiparticles at node $i$. This time we
count particles and antiparticles at the nodes $2, 3, \cdots , n$
double, i.e. we have a total of $k=\ell-2n$ particles.
This
number is preserved by the ${\bf a}_i$'s which we can describe as
follows: For $i \leq n$ the operator ${\bf a}_i$ moves a particle from
node $i-1$ to node $i$ if $i \neq 2$, while
${\bf a}_2$ removes $1$ particle from each of the nodes $0$ and $1$
and fuse these into $1$ particle placed at node $2$. Again ${\bf
a}_{n+i}$ is the inverse of ${\bf a}_i$. So for instance
${\bf a}_{n+2}$ removes a particle from node $2$, splits it into $2$
and places these at nodes $0$ and $1$. Hence it could be thought of as
moving particles one step to
the left resp. right along the diagram \eqref{WeylBaffine} below
(except for $\bf a_1$ and $\bf a_{n+1}$ which moves between nodes $0$
and $1$).

Let $\lambda \in \mathcal{A}_\ell$. This means that the corresponding particle
configuration contains no antiparticles and $k$ particles. For any $J
\subset \{1, 2, \cdots , 2n\}$
the configuration for $\lambda + \epsilon_J$ contains at most $1$
antiparticle at node $n$ and at most
$2$ antiparticles at any other node.
Moreover, if node $i$ has 2 antiparticles, there are no antiparticles
at nodes $i-1$ and $i+1$. If there is a node with exactly $1$
antiparticle, then $\pi_\ell(e^{\lambda +
\epsilon_J}) = 0$, since the corresponding weight is $\ell$-singular.
If there are $2$ antiparticles at node $0$ or $1$ take a particle at
node $2$ (which counts double) and make it annihilate the $2$
antiparticles at the given
node. If there are $2$ antiparticles at node $2$, we annihilate them by
taking a particle from each of the nodes $0, 1$, and $3$ (the $2$
particles from nodes $0$ and $1$ need to combine in
order to annihilate one of the antiparticles from node $2$). Finally,
$2$ antiparticles at any node $i$ with $2<i<n$ are annihilated by
taking a particle from each of the two
adjacent nodes. If after this process the result still contains
antiparticles, we repeat it until this is no longer the case. This end
configuration is then, up to signs,
equal to $\pi_\ell(e^{\lambda + \epsilon_J})$. In case it is non-zero,
the sign is positive if we have used an even number of annihilation
steps and negative otherwise. Again the
number of steps is at most equal to the number of nodes with $2$
antiparticles.

In this case the action of $W_\ell$ which is applied to remove 2
antiparticles can be visualized using the extended Dynkin diagram of
type  $\tilde B_n^t$:
\begin{equation}
\label{WeylBaffine}
\xymatrix{\stackrel{0}\bullet
\ar@{-}[dr]&\\
&\stackrel{2}\bullet
\ar@{-}[r]&
\stackrel{3}\bullet
\ar@{-}[r]&
\stackrel{4}\bullet
&
\cdots&
\stackrel{n-1}\bullet
\ar@{=}[r]|{\langle}&
\stackrel{n}\bullet\\
\stackrel{1}\bullet
\ar@{-}[ur]}
\end{equation}
Again particles are always taken from neighboring vertices, one for each
single edge; for the special vertex $n$ two particles get moved along
a double edge against the arrow.

\subsection{Combinatorial fusion rule}

Let ${\bf e}_i$ be the operators on $\Z[\mathcal{A}_\ell]$ defined above. The
definitions of $\mathcal{A}_\ell$ and of ${\bf e}_i$ are different in the odd
and even cases. However, in both cases we have
\begin{prop}
The operators ${\bf e}_i$ and ${\bf e}_j$ commute for $1\leq i, j \leq
n$.
\end{prop}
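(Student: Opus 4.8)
The plan is to identify, under the obvious correspondence between $\Z[\mathcal{A}_\ell]$ and the fusion ring $\mathcal F_q(\mathfrak{sp}_{2n},\ell)$, each operator ${\bf e}_i$ with multiplication by the class $[V_q^i]$, and then to read off commutativity of the ${\bf e}_i$ from the already established commutativity of $\mathcal F_q$. The point is that the combinatorial ``particle-hopping'' descriptions, while illuminating, are not actually needed: the statement is a formal consequence of the Pieri-type rules.

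First I would spell out the correspondence: send $e^\lambda \mapsto [\lambda]$ for $\lambda \in \mathcal{A}_\ell$, giving a $\Z$-linear isomorphism $\Z[\mathcal{A}_\ell] \xrightarrow{\sim} \mathcal F_q$. Under it the map $\pi_\ell$ becomes exactly the extended bracket notation $[\cdot]$ on $X$: both send $e^\mu$ to $(-1)^{l(w)}[w\cdot\mu]$ when some $w\in W_\ell$ moves $\mu$ into $\mathcal{A}_\ell$, and to $0$ when $\mu$ is $\ell$-singular. Hence $\pi_\ell({\bf a}_J(e^\lambda)) = \pi_\ell(e^{\lambda+\epsilon_J})$ corresponds to $[\lambda+\epsilon_J]$, and summing over $|J|=i$ gives
$$ {\bf e}_i(e^\lambda) \;\longleftrightarrow\; \sum_{|J|=i}[\lambda+\epsilon_J] \;=\; [V_q^i]\,[\lambda], $$
the last equality being the displayed Pieri-type formula $[V_q^i][\mu]=\sum_{|J|=i}[\mu+\epsilon_J]$ from the quantum case, valid exactly in the range $i\leq n$ appearing in the statement. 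Since both ${\bf e}_i$ and multiplication by $[V_q^i]$ are $\Z$-linear and agree on the basis $\{e^\lambda\}_{\lambda\in\mathcal{A}_\ell}$, they coincide as operators; thus ${\bf e}_i$ is precisely multiplication by $[V_q^i]$ in $\mathcal F_q$.

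Because $\mathcal F_q$ is a commutative and associative ring, its multiplication operators commute, so for $1\leq i,j\leq n$ one gets
$$ {\bf e}_i\,{\bf e}_j \;=\; [V_q^i][V_q^j] \;=\; [V_q^j][V_q^i] \;=\; {\bf e}_j\,{\bf e}_i. $$
I expect no serious obstacle: the only thing requiring care is the bookkeeping identification of $\pi_\ell$ with the extended bracket and of $\sum_{|J|=i}[\lambda+\epsilon_J]$ with the genuine fusion product $[V_q^i][\lambda]$, and both are immediate from the definitions recalled above. (If one wished to avoid invoking commutativity of $\mathcal F_q$ altogether, the same identity ${\bf e}_i{\bf e}_j={\bf e}_j{\bf e}_i$ could instead be extracted directly from formula \eqref{eq9}, but routing it through the ring $\mathcal F_q$ is cleaner.)
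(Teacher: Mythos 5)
Your argument is correct and is essentially identical to the paper's own proof: both identify ${\bf e}_i$ with multiplication by $[V_q^i]$ under the $\Z$-module isomorphism $\Z[\mathcal{A}_\ell]\simeq\mathcal F_q$ and then invoke commutativity of the fusion ring. You merely spell out the bookkeeping (matching $\pi_\ell$ with the extended bracket and using the Pieri-type formula) that the paper leaves implicit.
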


\begin{proof} [{\bf Proof:}]
Note that as $\Z$-modules we have an isomorphism $\mathcal F_q \simeq
\Z[\mathcal{A}_\ell]$ such that the operator ${\bf e}_i$ corresponds to
multiplication by $V_q^i$ in $\mathcal F_q$. But $\mathcal F_q$ is
a commutative and associative ring, therefore multiplications by
$V_q^i$ and $V_q^j$ commute.
\end{proof}

Set now ${\bf e}'_0 = 1$, ${\bf e}'_1 = {\bf e}_1$ and ${\bf e}'_i =
{\bf e}_i -{\bf e}_{i-2}$ for $2\leq i \leq n$. If $\lambda \in
\mathcal{A}_\ell$ we define in analogy with (3.5) the operator ${\bf
s}'_\lambda$ on $\Z[\mathcal{A}_\ell]$ by
\begin{equation}
{\bf s}'_\lambda  = \det({\bf e}'_{\lambda^t_i-i+j}).
\end{equation}
By the above proposition the ${\bf e}'_i$'s clearly commute so that it
makes sense to form this determinant. By \cite[Appendix A.3, Corollary
24.24]{FH} the operator ${\bf s}'_\lambda$ corresponds to
multiplication by
$L_q(\lambda)$ in $\mathcal F_q$. Hence we define multiplication on
$\Z[\mathcal{A}_\ell]$ (as before we denote the basis vector corresponding to 
$\lambda \in \mathcal A_\ell$ by $\lambda$) as follows
\begin{equation}
\lambda \star \mu = {\bf s}'_\lambda(e^\mu)
\end{equation}
and denote the resulting ring by $\mathcal
F_{comb}(\mathfrak{sp}_{2n},\ell)$. This is then a commutative,
associative ring with unit $1=e^0$ and underlying vector space
canonically identified with $\mathcal F_q(\mathfrak{sp}_{2n},\ell)$
via $\la\mapsto [\la]$. Moreover, we obtain
\begin{thm}
\label{springs}
For any $\ell > 2n$ we have an isomorphism of rings $$\mathcal
F_q(\mathfrak{sp}_{2n},\ell)=\mathcal F_{comb}(\mathfrak{sp}_{2n},
\ell)$$
taking each basis element $[\lambda]$ to the basis
element $\lambda\in \mathcal F_{comb}(\mathfrak{sp}_{2n},
\ell).$
\end{thm}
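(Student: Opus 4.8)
The plan is to mirror the proof of Theorem~\ref{thm:A}, with the elementary operators ${\bf e}_i$ of the $\mathfrak{gl}_n$-case replaced by the ${\bf e}'_i$ and the Schur operators $s_\lambda$ by their symplectic analogues ${\bf s}'_\lambda$. Both $\mathcal F_q(\mathfrak{sp}_{2n},\ell)$ and $\mathcal F_{comb}(\mathfrak{sp}_{2n},\ell)$ are free $\Z$-modules with bases indexed by $\mathcal{A}_\ell$, and $\Phi$ matches these bases, so $\Phi$ is automatically a $\Z$-module isomorphism. Everything therefore reduces to multiplicativity: for all $\lambda,\mu\in\mathcal{A}_\ell$ we must show $[\lambda][\mu]={\bf s}'_\lambda(e^\mu)$ in $\Z[\mathcal{A}_\ell]$, equivalently that multiplication by $[\lambda]$ on $\mathcal F_q$ coincides with the operator ${\bf s}'_\lambda$.

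First I would dispose of the fundamental weights, the analogue of the $m\le1$ base case in Theorem~\ref{thm:A}. By the preceding proposition ${\bf e}_i$ is multiplication by $[V_q^i]$, and the short exact sequence \eqref{ses} together with the Pieri rule \eqref{fusionC} gives $[\omega_i]=[V_q^i]-[V_q^{i-2}]$; since $\omega_i\in\bar{\mathcal{A}_\ell}$ we have $[\omega_i]=[L_q(\omega_i)]$. Hence ${\bf e}'_i={\bf e}_i-{\bf e}_{i-2}$ is exactly multiplication by $[\omega_i]$, which in particular makes the commutativity of the ${\bf e}'_i$ transparent and so makes the determinant ${\bf s}'_\lambda$ well defined.

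For general $\lambda$ I would induct on $\lambda_1=\sum_i m_i$. If $\lambda\neq0$, write $\lambda=\lambda'+\omega_i$ with $\lambda'\in\mathcal{A}_\ell$ and $\lambda'_1<\lambda_1$. The tensor decomposition yields $[\lambda'][\omega_i]=[\lambda]+\sum_\eta[\eta]$ with all $\eta$ strictly smaller, and the symplectic Pieri expansion of the determinant gives the matching identity ${\bf s}'_{\lambda'}\,{\bf e}'_i={\bf s}'_\lambda+\sum_\eta{\bf s}'_\eta$ over the same $\eta$. Granting this, the computation is formally identical to the one in Theorem~\ref{thm:A}: using commutativity and associativity in $\mathcal F_q$ and the induction hypothesis,
\begin{align*}
\Phi([\lambda][\mu]) &= \Phi([\lambda'][\omega_i][\mu])-\sum_\eta\Phi([\eta][\mu])\\
&= \lambda'\star\omega_i\star\mu-\sum_\eta\eta\star\mu=\lambda\star\mu.
\end{align*}
More conceptually, one may bypass the induction by using the surjective ring homomorphism $p\colon\Z[X]^W\to\mathcal F_q$, $\chi(\nu)\mapsto[\nu]$, coming from the description of the fusion ideal in \cite{AP}: as ${\bf e}'_i$ is multiplication by $[\omega_i]=p(\chi(\omega_i))$ and the ${\bf e}'_i$ commute, applying $p$ to the classical symplectic Jacobi-Trudy identity \cite[Appendix A.3, Corollary 24.24]{FH} for $\chi(\lambda)$ in the $\chi(\omega_j)$ directly identifies ${\bf s}'_\lambda$ with multiplication by $[\lambda]$.

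The main obstacle is exactly this determinantal identity, i.e. the symplectic Pieri rule used in the inductive step (equivalently, the passage from the $\chi(\omega_j)$ to $\chi(\lambda)$ through \cite[Corollary 24.24]{FH}). Unlike in type $A$, the fundamental characters are not the full exterior powers but differ from them by lower ones via \eqref{ses}, while the self-duality $\Lambda^iV\cong\Lambda^{2n-i}V$ forces reflection relations among the entries ${\bf e}'_m$ for indices $m$ outside $\{0,1,\dots,n\}$, and such indices genuinely occur in ${\bf s}'_\lambda$ once $\lambda_1>n$. Fixing these conventions so that the lower symplectic corrections cancel and ${\bf s}'_{\lambda'}{\bf e}'_i$ produces precisely $[\lambda]+\sum_\eta[\eta]$ is the delicate heart of the argument; once it is in place the odd and even cases are uniform, the only difference between them being hidden inside $\pi_\ell$ and the shape of $\mathcal{A}_\ell$.
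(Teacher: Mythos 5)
Your closing ``more conceptual'' paragraph is, in fact, the paper's entire proof: the authors establish in the preceding Proposition that ${\bf e}_i$ is multiplication by $[V_q^i]$ on $\mathcal F_q$, deduce that the ${\bf e}'_i$ are commuting multiplication operators by $[L_q(\omega_i)]=[\omega_i]$, and then simply invoke the classical symplectic determinantal identity \cite[Appendix A.3, Corollary 24.24]{FH} to conclude that ${\bf s}'_\lambda$ is multiplication by $[L_q(\lambda)]=[\lambda]$; the theorem is then immediate, since a determinant of commuting multiplication operators is again a multiplication operator, by the image of $\det(\chi(\omega_{\bullet}))=\chi(\lambda)$ under the ring homomorphism onto $\mathcal F_q$. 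So if one reads your proposal as resting on that route, it matches the paper and is complete (and your observation that $[\omega_i]=[V_q^i]-[V_q^{i-2}]$ via \eqref{ses} is exactly the point being used).

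Your \emph{primary} route, however --- the induction mirroring Theorem \ref{thm:A} --- has a genuine gap, and it is precisely the one you flag: the operator identity ${\bf s}'_{\lambda'}{\bf e}'_i={\bf s}'_\lambda+\sum_\eta{\bf s}'_\eta$ with the \emph{same} $\eta$'s as in the fusion-product decomposition is asserted but never proved. In type $A$ this step is cheap because the ${\bf e}_i$ literally implement the Pieri rule \eqref{eq10} and the bookkeeping of the determinant reduces to the classical Giambelli recursion for genuine elementary symmetric functions. In type $C$ the entries ${\bf e}'_m$ carry the correction ${\bf e}_m-{\bf e}_{m-2}$, and, as you note, the determinant involves indices $m>n$ whose values are pinned down only by the self-duality convention ${\bf e}_m={\bf e}_{2n-m}$; verifying that the determinant then satisfies the symplectic Pieri recursion is essentially equivalent to proving the Jacobi--Trudi identity of \cite[Corollary 24.24]{FH} itself. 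So the induction does not avoid the classical input --- it repackages it --- and as written the inductive argument is circular at its key step. The fix is simply to promote your last paragraph from an aside to the proof: once the ${\bf e}'_i$ are known to be commuting multiplication operators, the classical identity transported through the ring homomorphism $\Z[X]^W\cong K\twoheadrightarrow\mathcal F_q$ does all the work, uniformly in the parity of $\ell$.
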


\subsection{Commutative presentation}
Finally we connect our combinatorial description with well-known
presentations of the fusion ring, \cite{Gepner}, \cite{BR}. For the
analogous statement in type $A$ see \cite[Theorem 6.20]{KorffStroppel}.

\begin{thm}
Let $\ell$ be even. There is an isomorphism of commutative associative
rings
\begin{eqnarray*}
\Psi: \mathcal F_{comb}(\mathfrak{sp}_{2n}, \ell) \simeq
\mZ[\chi({\omega_1}),\ldots,\chi(\omega_n)]/
\langle
\chi({k\omega_1+\omega_i})\mid 1\leq i\leq n \rangle
\end{eqnarray*}
which sends $\la$ to the character $\chi(\la)$.
\end{thm}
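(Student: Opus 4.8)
The plan is to pin down the fusion ideal inside the character ring and then match it with the stated ideal by comparing leading terms. Recall first the classical fact that the symplectic Weyl character ring is the polynomial ring $\Z[X]^W=\Z[\chi(\omega_1),\dots,\chi(\omega_n)]$, and that the unitriangular change of basis between $\{[T_q(\lambda)]\}$ and $\{\chi(\lambda)=[\nabla_q(\lambda)]\}$ gives a ring isomorphism $K=\Gr(\mathcal T_q)\cong\Z[X]^W$ which is the identity on $\bar{\mathcal{A}_\ell}$ by \eqref{eq7}; thus $\mathcal F_q=K/I_\ell=\Z[X]^W/I_\ell$. By Proposition~\ref{generators} and the Remark following it (which covers type $C$ also in the even case), the minimal elements of $X^+\setminus\mathcal{A}_\ell$ are the weights $\mu=\sum_i m_i\omega_i$ with $\sum_i m_i=k+1$, i.e.\ in $\epsilon$-coordinates the partitions with $\mu_1=k+1$ and at most $n$ parts. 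Each such $\mu$ lies in $\bar{\mathcal{A}_\ell}$, so $T_q(\mu)=\nabla_q(\mu)$ has character $\chi(\mu)$ and therefore $I_\ell=\langle\,\chi(\mu)\mid \mu_1=k+1\,\rangle$. Setting $g_i:=\chi(k\omega_1+\omega_i)$, whose weight is the hook $(k+1,1^{i-1})$ and hence satisfies $\sum_i m_i=k+1$, we get $\langle g_1,\dots,g_n\rangle\subseteq I_\ell$ for free. By Theorem~\ref{springs} the assertion reduces to showing that the resulting surjection $B:=\Z[X]^W/\langle g_1,\dots,g_n\rangle\twoheadrightarrow\mathcal F_q$ is an isomorphism.

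To bound $B$ I would pass to the associated graded ring for the filtration of $\Z[X]^W$ by number of boxes, i.e.\ $\deg\chi(\omega_i)=i$. By \eqref{ses} we have $\chi(\omega_m)=[\Lambda^m V]-[\Lambda^{m-2}V]=e_m-e_{m-2}$ with $e_m=[\Lambda^m V]$, so $\chi(\omega_m)$ has leading symbol $e_m$; feeding this into the dual Jacobi--Trudi determinant expressing $\chi(\mu)$ through the $\chi(\omega_j)$ (cf.\ \cite{FH}) shows that the top-degree part of any symplectic character $\chi(\mu)$ is the ordinary Schur polynomial $s_\mu$. In particular the leading symbol of $g_i$ is the hook Schur polynomial $s_{(k+1,1^{i-1})}$, so the initial ideal of $\langle g_1,\dots,g_n\rangle$ contains $\langle s_{(k+1,1^{i-1})}\mid 1\le i\le n\rangle$ in $\Z[e_1,\dots,e_n]$.

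Now, by exactly the Pieri identities used in the type $A$ computation of Section~3 (read at the level of leading terms), these hook Schur polynomials generate the ideal $\langle h_{k+1},\dots,h_{k+n}\rangle$, which is the classical presentation of $H^{*}(\mathrm{Gr}(n,n+k);\Z)$: a free $\Z$-module of rank $\binom{n+k}{n}$ with Schubert basis $\{s_\lambda\}$ indexed by the partitions $\lambda$ inside the $n\times k$ rectangle. These partitions are precisely the elements of $\mathcal{A}_\ell$ (at most $n$ parts, each $\le k$, i.e.\ $\sum_i m_i\le k$), and indeed $|\mathcal{A}_\ell|=\binom{n+k}{n}$. Consequently the symbols $s_\lambda$ of the characters $\chi(\lambda)$, $\lambda\in\mathcal{A}_\ell$, span the associated graded of $B$, so the finite set $\{\chi(\lambda)\mid\lambda\in\mathcal{A}_\ell\}$ spans $B$ over $\Z$. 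Under $B\twoheadrightarrow\mathcal F_q$ this set maps onto the $\Z$-basis $\{[\lambda]\}_{\lambda\in\mathcal{A}_\ell}$; a spanning set that maps bijectively onto a basis of a quotient is automatically a basis, and the quotient map is then an isomorphism. Hence $\langle g_1,\dots,g_n\rangle=I_\ell$ and $\Psi$ is the asserted ring isomorphism.

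The main obstacle is the input to the middle step: that the leading symbols of the $g_i$ really cut out the full Grassmannian relation ideal, equivalently that the associated graded quotient is $\Z$-free of the expected rank with Schubert basis matching $\mathcal{A}_\ell$. This is the type $A$ equality of the two presentations re-run at the level of leading terms, and the bookkeeping of the relation degrees $k+1,\dots,k+n$ against $\deg e_i=i$ must be checked to land on $\binom{n+k}{n}$. A more hands-on alternative, closer to the Section~3 proof, is to dispense with gradings and reduce each $\chi(\mu)$ with $\mu_1=k+1$ directly to the $g_i$ through a telescoping chain of symplectic Pieri relations coming from \eqref{ses}; there the difficulty is that, unlike in type $A$, these Pieri rules carry extra contraction terms (boxes are removed as well as added), so the chain must be arranged so that every auxiliary term stays inside the already-treated family of weights.
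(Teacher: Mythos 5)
Your argument is correct, and after the common first step --- using Proposition \ref{generators} together with \eqref{eq7} to identify $I_\ell$ as the ideal generated by the $\chi(\mu)$ with $\mu_1=k+1$, whence $\langle g_1,\dots,g_n\rangle\subseteq I_\ell$ with $g_i=\chi(k\omega_1+\omega_i)$ is immediate --- it takes a genuinely different route from the paper for the reverse inclusion. The paper proves $I_\ell\subseteq\langle g_1,\dots,g_n\rangle$ directly: it expands the symplectic Jacobi--Trudi determinant of each $\chi(\mu)$ with $\mu_1=k+1$ along its first row to exhibit it as a $\Z[\chi(\omega_1),\dots,\chi(\omega_n)]$-combination of $\chi((k+1)\omega_1)$ and $\chi((k+i+1)\omega_1)+\chi((k+1-i)\omega_1)$, $1\leq i\leq n-1$, and then shows by a second expansion that these lie in $\langle g_1,\dots,g_n\rangle$. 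You instead sidestep the equality of ideals and recover it a posteriori from a rank count: passing to the associated graded for the box-count filtration turns the $g_i$ into the hook Schur polynomials $s_{(k+1,1^{i-1})}$, whose ideal in $\Z[e_1,\dots,e_n]$ is $\langle h_{k+1},\dots,h_{k+n}\rangle$, so the quotient is spanned by the $\binom{n+k}{n}=|\mathcal{A}_\ell|$ Schubert classes, and a spanning set of that size surjecting onto the free basis $\{[\lambda]\}_{\lambda\in\mathcal{A}_\ell}$ forces an isomorphism. Your route yields the freeness of the quotient and the basis $\{\chi(\lambda)\}_{\lambda\in\mathcal{A}_\ell}$ as byproducts and makes the formal kinship with the type $A$ presentation (the cohomology of $\mathrm{Gr}(n,n+k)$) transparent; the paper's route stays inside symplectic character combinatorics and produces explicit expressions of the generators of $I_\ell$ in terms of the $g_i$. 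The one step you rightly flag as needing care is the leading-symbol claim $\operatorname{gr}\chi(\mu)=s_\mu$: for $\mu_1=k+1$ the symplectic dual Jacobi--Trudi determinant has entries indexed by $m>n$, which are not zero (as they are in $\Z[e_1,\dots,e_n]$) but equal, up to sign, terms of strictly lower degree via $[\Lambda^mV]\cong[\Lambda^{2n-m}V]$; one checks that these contribute only below top degree, so the degree-$|\mu|$ part of the determinant is exactly $s_\mu$ computed in $\Z[e_1,\dots,e_n]$ (where the offending entries vanish). With that verification supplied the proof is complete.
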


Note that the number of generators in the ideal is independent of the
level in this case.

\begin{proof}
The fusion ring is a quotient of
$\mZ[\chi({\omega_1}),\ldots,\chi({\omega_n})]$ by some ideal $I_\ell$
whose generators are given by Proposition \ref{generators}. These are
the $[T_q(\la)]$ with $\la$ minimal such that $\langle
\la,\beta_0^\vee\rangle>k$. In our case this is precisely  $\langle
\la,\beta_0^\vee\rangle=k+1$, hence $\la_1=k+1$. Since
$[T_q(\la)]=[L_q(\la)]$ by \eqref{eq7}, Theorem \ref{springs} implies
that $I_\ell$ is generated by all $\chi(\la)$ where $\la_1=k+1$. Now
using the Jacobi-Trudi type determinant formula
\cite[3.9]{Krattenthaleretco} for symplectic characters, expanded
along the first row, implies that
$\chi(\lambda)$ is a
$\mZ[\chi({\omega_1}),\ldots,\chi({\omega_n})]$-linear combination of
the elements $\chi({(k+1)\omega_1})$ and
$\chi({(k+i+1)\omega_1}+\chi(k+1-i)\omega_1)$ for $1\leq i\leq n-1$.
Hence $I_\ell$ is contained in the ideal generated by these and they
are obviously also contained in this ideal. Expanding the determinant
formulas for $\chi({k\omega_1+\omega_i})$, we get
$\chi({k\omega_1+\omega_i})=\mp \chi({(k+i)\omega_1})+R$ for some linear
combination of $\chi(\lambda)$'s with $\la$ smaller in the dominance
ordering of partitions.
Therefore, our ideal is also generated by the $
\chi({k\omega_1+\omega_i})$ for $1\leq i\leq n$.
\end{proof}

For odd $\ell$ the presentation is not as explicit, but at least we
have

\begin{thm}
Let $\ell$ be odd. There is an isomorphism of commutative, associative
rings
\begin{eqnarray*}
\Psi: \mathcal F_{comb}(\mathfrak{sp}_{2n}, \ell) \simeq
\mZ[\chi({\omega_1}),\ldots,\chi({\omega_n})]/I_\ell
\end{eqnarray*}
which sends $\la$ to the character $\chi(\la)$. Here $I_\ell$ is
generated by all characters $\chi(\la)$ where $\la_1+\la_2=k+1$.
\end{thm}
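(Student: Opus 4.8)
The isomorphism $\Psi$ is essentially formal, so the real content is the description of the ideal $I_\ell$; my plan is to isolate that content first and then settle it by a combinatorial analysis of the wall of $\mathcal{A}_\ell$. By Theorem \ref{springs} we already have $\mathcal F_q(\mathfrak{sp}_{2n},\ell)=\mathcal F_{comb}(\mathfrak{sp}_{2n},\ell)$, and since the classes $\chi(\lambda)=[\nabla_q(\lambda)]$ and $[T_q(\lambda)]$ differ by a unitriangular base change, the Grothendieck ring $K$ is canonically the character ring $\mZ[X]^W$, which in type $C_n$ is the polynomial ring $\mZ[\chi(\omega_1),\ldots,\chi(\omega_n)]$. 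Under $\mathcal F_q=K/I_\ell$ this gives the stated presentation, with $\Psi$ sending a basis vector $\lambda\in\mathcal{A}_\ell$ to $\chi(\lambda)\bmod I_\ell$; this is well defined because $\lambda\in\mathcal{A}_\ell\subset\bar{\mathcal{A}_\ell}$ forces $[T_q(\lambda)]=\chi(\lambda)$ via \eqref{eq7}.

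Next I would apply Proposition \ref{generators}: the ideal $I_\ell$ is generated by the classes $[T_q(\mu)]$ with $\mu$ minimal in $X^+\setminus\mathcal{A}_\ell$ with respect to $\leq'$. The theorem thus reduces to two points. First, for any weight with $\mu_1+\mu_2=k+1$ one computes $\langle\mu+\rho,\alpha_0^\vee\rangle=(k+1)+(2n-1)=\ell$, so $\mu\in\bar{\mathcal{A}_\ell}\cap X^+$ and \eqref{eq7} again yields $[T_q(\mu)]=\chi(\mu)$. Second, and this is the crux, I must show that the minimal weights in $X^+\setminus\mathcal{A}_\ell$ are precisely those lying on this wall.

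For the combinatorial step I would use $\langle\lambda,\alpha_0^\vee\rangle=\lambda_1+\lambda_2$, so that $\mathcal{A}_\ell=\{\lambda\in X^+\mid\lambda_1+\lambda_2\leq k\}$ with $k=\ell-2n$. The easy direction is the converse: if $\mu_1+\mu_2=k+1$ and $\la<'\mu$, then $\langle\la,\alpha_0^\vee\rangle<\langle\mu,\alpha_0^\vee\rangle=k+1$ because $\langle\omega_i,\alpha_0^\vee\rangle\geq1$ for all $i$, so any such dominant $\la$ lies in $\mathcal{A}_\ell$ and $\mu$ is minimal. For the forward direction suppose $\mu\in X^+\setminus\mathcal{A}_\ell$ has $\mu_1+\mu_2\geq k+2$; I claim $\mu$ is not minimal. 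If $\mu_1>\mu_2$ then $\mu-\omega_1$ is dominant and still satisfies $(\mu-\omega_1)_1+(\mu-\omega_1)_2\geq k+1$, so it sits in $X^+\setminus\mathcal{A}_\ell$ below $\mu$. Otherwise $\mu_1=\mu_2$ and $\mu_1+\mu_2=2\mu_1$ is even; here the oddness of $\ell$ is decisive, since $k=\ell-2n$ is odd and hence $k+2$ is odd, forcing $\mu_1+\mu_2\geq k+3$. Picking the least $i\geq2$ with $\mu_i>\mu_{i+1}$ (which exists because $\mu_2=\mu_1>0=\mu_{n+1}$), the weight $\mu-\omega_i$ is dominant with the sum of its first two coordinates $\geq k+1$, again lying strictly below $\mu$ in $X^+\setminus\mathcal{A}_\ell$. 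Either way minimality fails, so every minimal $\mu$ satisfies $\mu_1+\mu_2=k+1$, and $I_\ell=\langle\chi(\mu)\mid\mu_1+\mu_2=k+1\rangle$.

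The hard part is exactly this parity argument: for odd $\ell$ one must exclude minimal weights sitting a distance two above the wall, and it is only the oddness of $k$ that prevents a balanced weight $\mu_1=\mu_2=(k+2)/2$ from appearing as a new minimal generator. I do not expect a clean further reduction of this generating set, in contrast to the even case: the Jacobi--Trudi collapse that cut the even generators down to the $n$ elements $\chi(k\omega_1+\omega_i)$ has no odd counterpart, which is why the description stops at the full wall $\{\chi(\mu)\mid\mu_1+\mu_2=k+1\}$.
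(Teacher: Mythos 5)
Your proposal is correct and follows essentially the same route as the paper: reduce via Theorem \ref{springs} and Proposition \ref{generators} to identifying the $\leq'$-minimal weights outside $\mathcal{A}_\ell$, use the parity of $k=\ell-2n$ to rule out the potential second family of minimal generators sitting at distance two above the wall (the paper phrases this as the condition $m_1=0$, $2\sum_{i\geq 2}m_i=k+2$ having no solutions), and invoke \eqref{eq7} to replace $[T_q(\lambda)]$ by $\chi(\lambda)$ on the wall $\langle\lambda+\rho,\alpha_0^\vee\rangle=\ell$. Your write-up merely carries out the minimality analysis in the $\epsilon$-coordinates rather than the $\omega$-coordinates and spells out details the paper leaves implicit.
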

\begin{proof}
We again use Proposition \ref{generators} which shows that $I_\ell$ is
generated by all $[T(\la)]$ where $\la$ satisfies
\begin{enumerate}
\item $m_1+2\sum_{i=2}^n m_i=k+1=\ell-2n + 1$ or

\item $m_1 = 0$ and $2\sum_{i=2}^n m_i=k+2=\ell-2n+2$.
\end{enumerate}
Note that $k$, and then also $k+2$, is odd, hence (2) has no solutions.
For the first set of generators we have again by \eqref{eq7} the
equality $[T_q(\la)]=[L_q(\la)]$ and $\la_1+\la_2=k+1$.
\end{proof}

\section{Even and odd fusion rings for Type D}
In this section we assume that $\mathfrak g = \mathfrak{so}_{2n}$. The simple roots are now $\alpha_i = \epsilon_i -
\epsilon_{i+1}, i = 1, \cdots , n-1$ together with $\alpha_n =
\epsilon_{n-1}+\epsilon_n$. The corresponding fundamental weights are
$\omega_i = \epsilon_1 + \epsilon_2 + \cdots + \epsilon_i$,
$i = 1, \cdots , n-2$ and $\omega_{n-1}=\frac{1}{2}
(\epsilon_1+\cdots\epsilon_{n-1}-\epsilon_n)$ and
$\omega_{n}=\frac{1}{2} (\epsilon_1+\cdots\epsilon_{n-1}+\epsilon_n)$.
The highest root is
\begin{eqnarray*}
\alpha_0&=&\alpha_1 + 2 \alpha_2 + 2 \alpha_3 + \cdots +  \alpha_{n-1}
+ \alpha_n = \epsilon_1 + \epsilon_2=\omega_2.
\end{eqnarray*}

Set $k=\ell-(2n-2)$ in case $\ell$ is odd and $\frac{\ell}{2}-(2n-2)$
if $\ell$ is even. Then $\mathcal{A}_\ell$ consists of all dominant weights
$\sum_{i=1}^n m_i\omega_i$ such that
\begin{eqnarray*}
m_1+2m_2+\cdots+ 2m_{n-2}+m_{n-1}+m_{n}&\leq&k.
\end{eqnarray*}
Proposition \ref{generators} implies

\begin{thm}
Let  $\ell$ be odd or $\ell\equiv 2 \mod 4$. Then there is an
isomorphism of commutative, associative rings
\begin{eqnarray*}
\Psi: \mathcal F_q(\mathfrak{so}_{2n}, \ell) \simeq
\mZ[\chi({\omega_1}),\ldots,\chi({\omega_n})]/I_\ell
\end{eqnarray*}
which sends $[\la]$ to the character $\chi(\la)$. Here $I_\ell$ is
generated by all characters $\chi(\la)$ where $\la_1+\la_2=k+1$.
In case $\ell\equiv 0 \mod 4$ the ideal $I_\ell$ is generated by all
characters $\chi(\la)$ where $\la_1+\la_2=k+1$ together with the
characters of all $T(\la)$, where  $2(\la_2-\la_{n-1})=k+2$.
\end{thm}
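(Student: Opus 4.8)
The plan is to apply Proposition~\ref{generators} directly, which tells us that the fusion ideal $I_\ell$ is generated by the classes $[T_q(\mu)]$ as $\mu$ ranges over the minimal elements of $X^+\setminus\mathcal{A}_\ell$ with respect to $\leq'$. So the first task is to identify these minimal weights explicitly in type $D$. The defining inequality for $\mathcal{A}_\ell$ is $m_1+2m_2+\cdots+2m_{n-2}+m_{n-1}+m_n\leq k$, so a weight just outside the alcove satisfies this expression equal to $k+1$ (when a coefficient with weight $1$, namely $m_1,m_{n-1}$, or $m_n$, is incremented) or $k+2$ (when one of the doubly-weighted $m_i$ for $2\leq i\leq n-2$ is incremented from a boundary configuration, or two of the weight-one coordinates are). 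The combinatorial heart is the case analysis determining which of these candidates are genuinely $\leq'$-minimal: since $\omega_i=\epsilon_1+\cdots+\epsilon_i$ for $i\leq n-2$ but the two spin weights $\omega_{n-1},\omega_n$ are half-sums, the parity of $\ell$ (equivalently whether the spin weights lie in $X$ comfortably or interact with $\ell$-divisibility) governs whether the weight-two boundary can be reached minimally.

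First I would translate the condition on the boundary into the $\epsilon$-coordinates: writing $\lambda=\sum\lambda_i\epsilon_i$ with $\lambda_i=m_i+m_{i+1}+\cdots$, one checks that $\langle\lambda,\alpha_0^\vee\rangle=\lambda_1+\lambda_2=m_1+2m_2+\cdots+2m_{n-2}+m_{n-1}+m_n$, so the alcove condition is simply $\lambda_1+\lambda_2\leq k$. The generic minimal weights outside $\mathcal{A}_\ell$ are then those with $\lambda_1+\lambda_2=k+1$; these give the first family of generators $\chi(\lambda)$, valid in all the stated cases. The subtle part, reserved for $\ell\equiv 0\pmod 4$, is that there exist \emph{additional} $\leq'$-minimal weights not caught by $\lambda_1+\lambda_2=k+1$: these arise from incrementing a doubly-weighted coordinate, and the claim is that the genuinely new minimal ones are exactly those with $2(\lambda_2-\lambda_{n-1})=k+2$. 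I would verify minimality by showing that subtracting any fundamental weight $\omega_i$ from such a $\lambda$ lands back inside $\mathcal{A}_\ell$ (so it cannot be dominated by a smaller out-of-alcove weight), while for the odd and $\ell\equiv 2\pmod 4$ cases the same computation shows every out-of-alcove weight with a weight-two overshoot is dominated by one with $\lambda_1+\lambda_2=k+1$, so no extra generators appear.

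The reason the answer depends on $\ell\bmod 4$ rather than just parity is the arithmetic of the spin fundamental weights: the quantity $\lambda_2-\lambda_{n-1}=m_2+2m_3+\cdots$ couples to the half-integral shift coming from $\omega_{n-1},\omega_n$, and a weight can fail to be reducible by $\leq'$ precisely when reaching the $\lambda_1+\lambda_2=k+1$ wall would require passing through the spin nodes in a way obstructed when $k$ has the wrong parity. Concretely, $\ell\equiv 0\pmod 4$ forces $k$ even, and then the configuration with $2(\lambda_2-\lambda_{n-1})=k+2$ cannot be reduced coordinate-wise to the linear wall without leaving $X^+$, whereas for $\ell\equiv 2\pmod 4$ the parity works out and such weights are reducible. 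I would make this precise by the direct combinatorial check described above, tabulating for each candidate $\mu$ and each simple direction $\omega_i$ whether $\mu-\omega_i$ lies in $X^+\setminus\mathcal{A}_\ell$.

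I expect the main obstacle to be the clean verification of $\leq'$-minimality in the $\ell\equiv 0\pmod 4$ case, i.e.\ proving both that the weights with $2(\lambda_2-\lambda_{n-1})=k+2$ really are minimal (nothing smaller outside the alcove lies below them under $\leq'$) and that together with the $\lambda_1+\lambda_2=k+1$ family they exhaust \emph{all} minimal elements. This is genuinely a finite but delicate case analysis over the position of the overshoot coordinate and the parity interaction with the spin nodes; the remark following Proposition~\ref{generators} already flags that for type $D$ the minimal elements lie on the upper wall only when $\ell$ is odd, so the even case is exactly where this extra care is needed. Once the set of minimal weights is pinned down, the passage from $[T_q(\mu)]$ to $\chi(\mu)$ is immediate from \eqref{eq7} since each such $\mu$ (for the first family) satisfies $L_q(\mu)=T_q(\mu)$, and the isomorphism $\Psi$ sending $[\lambda]\mapsto\chi(\lambda)$ is then forced by the identification of $\mathcal{F}_q$ with the character-ring quotient established earlier.
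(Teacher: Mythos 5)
Your proposal follows essentially the same route as the paper's proof: apply Proposition \ref{generators}, observe that the alcove functional is $\la_1+\la_2=m_1+2m_2+\cdots+2m_{n-2}+m_{n-1}+m_n$, and run the parity analysis on $k$ (odd when $\ell$ is odd or $\ell\equiv 2\bmod 4$, even when $\ell\equiv 0\bmod 4$) to decide whether $\leq'$-minimal weights with overshoot $k+2$ exist. One small slip: $\la_2-\la_{n-1}=m_2+m_3+\cdots+m_{n-2}$ (all coefficients equal to $1$), not $m_2+2m_3+\cdots$, though this does not affect the parity argument.
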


\begin{proof}
Note that  $m_1+2m_2+\cdots+ 2m_{n-2}+m_{n-1}+m_{n}=\la_1+\la_2$.
If $\ell$ is odd or $\ell\equiv 2 \mod 4$, then $k$ and $k+2$ are odd
and there is no minimal $\la\in X/\mathcal{A}_\ell$ with $\la_1+\la_2=k+2$. In
case $\ell\equiv 0 \mod 4$, $k$ is even and the elements $\la$ with
$2(\la_2-\la_{n-1})=2m_2+\cdots+ 2m_{n-2}=k+2$ are minimal.
\end{proof}

\section{Even and odd fusion rings for Type B}
In this section we assume that $\mathfrak g = \mathfrak {so}_{2n+1}$. The simple roots are now $\alpha_i = \epsilon_i -
\epsilon_{i+1}, i = 1, \cdots , n-1$ together with $\alpha_n =
\epsilon_n$. The corresponding fundamental weights are $\omega_i =
\epsilon_1 + \epsilon_2 + \cdots + \epsilon_i$,
$i = 1, \cdots , n-1$ and
$\omega_{n}=\frac{1}{2} (\epsilon_1+\cdots\epsilon_{n-1}+\epsilon_n)$.
The highest short respectively long roots are
\begin{eqnarray*}
\alpha_0&=&\alpha_1 + \alpha_2 + \alpha_3 + \cdots + \alpha_n =
\epsilon_1 =\omega_1\\
\beta_0&=&\alpha_1 + 2 \alpha_2 + 2 \alpha_3 + \cdots + 2\alpha_n =
\epsilon_1 + \epsilon_2=\omega_2.
\end{eqnarray*}

Then $\mathcal{A}_\ell$ consists of all
dominant weights $\sum_{i=1}^n m_i\omega_i$ such that
\begin{eqnarray*}
2m_1+2m_2+2m_3+\cdots+2m_{n-1}+m_{n}&\leq&\ell-2n=:k\\
m_1+2m_2+2m_3+\cdots 2m_{n-2}+2m_{n-1}+m_{n}&\leq&\ell/2-2n+1=:k
\end{eqnarray*}
in case $\ell$ is odd respectively even.

\begin{thm}
Let $\ell\geq 2n$ if $\ell$ is odd and $\ell\geq 4n-4$ if $\ell$ is
even. Then there is an isomorphism of commutative, associative rings
\begin{eqnarray*}
\Psi: \mathcal F_q(\mathfrak{so}_{2n+1}, \ell) \simeq
\mZ[\chi({\omega_1}),\ldots,\chi({\omega_n})]/I_\ell
\end{eqnarray*}
which sends $[\la]$ to the character $\chi(\la)$. Here $I_\ell$ is
\begin{enumerate}
\item $I_\ell=\langle \chi(\la) \mid 2\la_1=k+1\rangle=\langle
    \{\chi({\frac{k-1}{2}\omega_1+\omega_i})\mid 1\leq i\leq n-1\}\cup \{\chi(\frac{k-1}{2}\omega_1+2\omega_n)\}\rangle
    $ in case $\ell$ is odd,
\item $I_\ell=\langle \chi(\la)\mid \la_1+\la_2=k+1\rangle$  in
    case $\ell\equiv 0 \mod 4$,
\item $I_\ell=\langle \chi(\la), [T(\mu)]\mid \la_1+\la_2=k+1,
    \mu_1+\mu_2=k+2\rangle$  in case $\ell\equiv 2 \mod 4$.
\end{enumerate}
\end{thm}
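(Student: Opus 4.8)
The plan is to apply Proposition~\ref{generators} directly: the fusion ideal $I_\ell$ is generated by the characters $\chi(\mu)=[T_q(\mu)]$ where $\mu$ ranges over the minimal elements of $X^+\setminus\mathcal{A}_\ell$ with respect to the ordering $\leq'$. First I would translate the alcove conditions into the $\epsilon$-coordinates. From the stated defining inequalities one computes that $2m_1+2m_2+\cdots+2m_{n-1}+m_n=2\lambda_1$ in the odd case, while $m_1+2m_2+\cdots+2m_{n-1}+m_n=\lambda_1+\lambda_2$ in the even case; this is the key dictionary that converts ``minimal outside $\mathcal{A}_\ell$'' into an arithmetic condition on the $\lambda_i$. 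The overall structure will mirror the type $C$ argument (Theorems for $\mathfrak{sp}_{2n}$), so I would reuse that machinery and only isolate the places where type $B$ differs, namely the behaviour of the short root $\alpha_0$ in the odd case and the parity of $k$ controlling whether there is a second family of minimal weights.

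**The three cases.**
For $\ell$ odd I would note $k=\ell-2n$, so the alcove wall is $2\lambda_1\leq k$, and minimality forces $2\lambda_1=k+1$; since $k$ is odd, $k+1$ is even, so solutions $\lambda_1=(k+1)/2$ exist, giving the first description $I_\ell=\langle\chi(\lambda)\mid 2\lambda_1=k+1\rangle$. The explicit generating set then follows by a Jacobi--Trudi expansion of $\chi(\lambda)$ along its first row, exactly as in the type $A$ and type $C$ commutative-presentation theorems, writing each such character as a $\mZ[\chi(\omega_1),\dots,\chi(\omega_n)]$-combination of the listed $\chi(\tfrac{k-1}{2}\omega_1+\omega_i)$ and $\chi(\tfrac{k-1}{2}\omega_1+2\omega_n)$ and checking the reverse inclusion via \eqref{eq7}. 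For $\ell\equiv 0\bmod 4$ the relevant number $k=\ell/2-2n+1$ is odd (since $\ell/2$ is even), so the potential second family $\lambda_1+\lambda_2=k+2$ with even left-hand-side arithmetic has no solutions, leaving only $I_\ell=\langle\chi(\lambda)\mid\lambda_1+\lambda_2=k+1\rangle$. For $\ell\equiv 2\bmod 4$, $\ell/2$ is odd so $k$ is even and $k+2$ is even; here a genuinely second family of minimal weights with $\mu_1+\mu_2=k+2$ appears, and because these lie on a face whose reflection does not simply fold them back (the relevant $T_q(\mu)$ need not equal $L_q(\mu)$), I would keep them in the unreduced form $[T(\mu)]$ rather than as honest Weyl characters.

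**The main obstacle.**
The principal difficulty is the parity bookkeeping together with the question of which minimal weights are genuinely new versus forced to zero. Concretely, the hard part will be verifying, in each congruence class of $\ell$, precisely when the secondary condition ($\lambda_1+\lambda_2=k+2$, or its odd-case analogue) admits a minimal weight in $X^+\setminus\mathcal{A}_\ell$ under $\leq'$; this hinges on the interplay between the alcove being governed by the \emph{short} root $\alpha_0=\omega_1$ (odd case) versus the \emph{long} root $\beta_0=\omega_2$ (even case), and on the parity of $k$ determining whether $k+1$ or $k+2$ is attainable by the integer-valued left-hand sides. A secondary subtlety is that in case (3) one cannot in general replace $[T(\mu)]$ by $\chi(\mu)$, since the linkage principle \eqref{eq7} applies only inside $\bar{\mathcal{A}_\ell}$; I would therefore argue the minimality of these $\mu$ combinatorially and leave the generator in its tilting form, exactly as the statement records. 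The remaining steps---the determinant expansions and the check that the listed elements both lie in and generate $I_\ell$---are routine applications of the symplectic/orthogonal Jacobi--Trudi identities and follow the template already established for types $A$ and $C$.
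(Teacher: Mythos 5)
Your proposal follows the paper's proof essentially verbatim: Proposition \ref{generators} together with the parity analysis of $2\lambda_1$ (odd case) versus $\lambda_1+\lambda_2$ (even case, split by $\ell \bmod 4$) gives the first description of $I_\ell$ in each case, and the explicit generating set in the odd case comes from expanding the orthogonal Jacobi--Trudi determinant along its first row. The only content you compress is the paper's auxiliary lemma converting the first-row entries $a_s=h_{m-1+s}-h_{m-1-s}$ into the listed characters $\chi(\tfrac{k-1}{2}\omega_1+\omega_s)$ by a triangular recursion --- unlike type $A$, these entries are not themselves the generators --- but that is exactly the routine determinant manipulation you defer to, so no genuine gap remains.
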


\begin{proof}
The first equality in each case is a direct consequence from
Proposition \ref{generators}. To see the second equality in case
$\ell$ is odd we abbreviate $m=\frac{k+1}{2}$ and
 $\chi^s:=\chi({\frac{k-1}{2}\omega_1+\omega_s})$ and set
 $a_s=h_{m-1+s}-h_{m-1-s}$ for $1\leq s\leq n$. Then
\begin{lemma} $a_s=(-1)^{s+1}(\chi^s-h_1\chi^{s-1}+h_2\chi^{s-2}-h_3\chi^{i-3}+\cdots
\mp h_s\chi^1).$
\end{lemma}
\begin{proof}
We use the following determinant formula from \cite[Ex. 24.46]{FH}
\begin{eqnarray*}
\chi(\la)&=&|h_{\la_{j}-i+j}-h_{\la_i-i-j}|_{1\leq i,j\leq n}
\end{eqnarray*}
Then $\chi^1$ is given by a lower diagonal matrix with diagonal
entries all $1$ except of the top entry which is $h_m-h_{m-2}=a_1$.
The matrix for $\chi^2$ is of the form
\begin{eqnarray*}
\begin{pmatrix}
h_m-h_{m-2}&1&0&0&0&\cdots\\
h_{m+1}-h_{m-3}&h_1&1&0&0&\cdots\\
h_{m+2}-h_{m-4}&h_2&h_1&0&0&\cdots\\
h_{m+3}-h_{m-5}&h_3&h_2&1&0&\cdots\\
&\vdots&
\end{pmatrix}
\end{eqnarray*}
Expanding along the second row gives $-\chi^2=a_2-h_1a_1$, hence
$a_2=-\chi^2+h_1a_1$. The result follows inductively by computing
$\chi^s$ by expanding along the $s$th row.
\end{proof}
Expanding the determinant formula for $\chi(\la)$ with $2\la_1=k+1$
shows that the $a_i$ generate $I_\ell$.
Since obviously $\langle \chi({\frac{k-1}{2}\omega_1+\omega_i})\mid
1\leq i\leq n\rangle\subset I_\ell$, the equality follows.
\end{proof}

\section{An exceptional example: $G_2$}
\label{G2A}
Let $\mathfrak{g}$ be of type $G_2$. In Bourbaki notation the positive roots are $\alpha_1$, $\alpha_2$, $\alpha_1+\alpha_2$, $2\alpha_1+\alpha_2=\alpha_0=\omega_1$, $3\alpha_1+\alpha_2$, $3\alpha_1+2\alpha_2=\beta_0=\omega_2$ with $\check{\alpha}_0=2\check{\alpha}_1+3\check{\alpha}_2$, and $\check{\beta}_0=\check{\alpha}_1+2\check{\alpha}_2$. Note that $\langle \rho, \check{\beta}_0\rangle=3$ and $\langle \la+\rho, \check{\alpha}_0\rangle=5$. The definition of the {\it level} $k$ depends on the parity of $\ell$ and whether $3$ divides $\ell$ or not. If $3$ does not divide $\ell$, set $k=\ell-6$ for $\ell$ odd and $k = \ell/2 - 6$ for $\ell$ even. In case $3|\ell$ set $\ell=3\ell'$ and define $k = l'-4$ if $\ell$ is odd and $k = \ell'/2-4$ if $\ell$ is even. Then the fundamental alcove $\mathcal{A}_\ell$ equals
\begin{eqnarray*}
\mathcal{A}_\ell&=
\begin{cases}
\{\la\in X^+\mid\langle \la, \check{\beta}_0\rangle\leq k\}=
\{(a,b)\mid a+2b\leq k\},&\text{if $3|\ell$}, \\
\{\la\in X^+\mid\langle \la, \check{\alpha}_0\rangle\leq k\}=
\{(a,b)\mid 2a+3b\leq k\}, &\text{otherwise,}
\end{cases}
\end{eqnarray*}
where we abbreviate $(a,b)=a\omega_1+b\omega_2$.

\begin{thm}
The $G_2$ fusion ring is isomorphic to $\mZ[\chi({\omega_1}),\chi({\omega_2})]/I_\ell$, where $I_\ell$ is generated by
\begin{eqnarray*}
\begin{cases}
\chi(0,\frac{k+1}{2}),\chi(2,\frac{k-1}{2}) ,\chi(4,\frac{k-3}{2}),
&\text{if $3|l$ and $k$ odd,}\\
\chi(0,\frac{k}{2}+1)+\chi(0,\frac{k}{2}), \chi(1,\frac{k}{2}), \chi(3,\frac{k}{2}-1), &\text{if $3|\ell$, $k$ even,}\\
\chi(\la),\chi(\mu)+\chi(\mu-\omega_1),&\text{if $3\not|\ell$, $k\equiv 2\op{mod} 3$,}\\
\chi((0,\frac{k+2}{3}),\chi(\la), \chi(\mu)+\chi(\mu-\omega_1),&\text{if $3\not|\ell$, $k\equiv 1\op{mod} 3$.}
\end{cases}
\end{eqnarray*}
Here $\la$ runs through $\Lambda:=\{(a,b)\mid 2a+3b=k+1\}$ and $\mu\in\Lambda':=\{(a,b)\mid a\not=0, 2a+3b=k+2\}$.
\end{thm}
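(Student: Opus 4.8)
The plan is to apply Proposition \ref{generators} to identify the minimal elements of $X^+\setminus\mathcal{A}_\ell$ and then translate the resulting generators $[T_q(\mu)]$ into characters via \eqref{eq7}, exactly as in the preceding type $C$ and type $D$ theorems. First I would write out the explicit boundary of $\mathcal{A}_\ell$ in each of the four cases. In the two cases with $3\mid\ell$ the alcove is $\{(a,b)\mid a+2b\le k\}$, so the minimal weights just outside are those on the line $a+2b=k+1$; in the two cases $3\nmid\ell$ the alcove is $\{(a,b)\mid 2a+3b\le k\}$ and the relevant line is $2a+3b=k+1$, giving the set $\Lambda$. The first equality in each case (generation of $I_\ell$ by the $\chi(\lambda)$ or $[T_q(\mu)]$ indexed by these lines) then follows directly from Proposition \ref{generators} together with \eqref{eq7}, which guarantees $[T_q(\lambda)]=\chi(\lambda)$ whenever $\lambda$ lies in $\bar{\mathcal{A}_\ell}$, i.e. on the upper wall.

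The subtle point, as in types $B$ and $D$, is that the minimal elements of $X^+\setminus\mathcal{A}_\ell$ need not all lie on the single hyperplane $\langle\lambda,\check\beta_0\rangle$ or $\langle\lambda,\check\alpha_0\rangle$ equal to $k+1$: because some fundamental weights pair with the relevant coroot to give $2$ rather than $1$, one must check whether weights on the next line ($a+2b=k+2$, resp. $2a+3b=k+2$) that cannot be reached by subtracting a fundamental weight also qualify as minimal with respect to $\leq'$. Concretely I would compute, for each fixed parity of $k$ and each residue of $k$ modulo $3$, which lattice points on the line ``$=k+2$'' are minimal; this is a finite arithmetic check on the two coordinates $(a,b)$. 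The parity and divisibility conditions in the theorem are exactly what determine whether the $=k+2$ line has admissible minimal points: for instance when $3\mid\ell$ and $k$ is odd the line $a+2b=k+2$ contributes nothing beyond the three listed generators $\chi(0,\tfrac{k+1}{2})$, $\chi(2,\tfrac{k-1}{2})$, $\chi(4,\tfrac{k-3}{2})$, whereas in the even-$k$ subcase one genuinely extra generator (the sum $\chi(0,\tfrac{k}{2}+1)+\chi(0,\tfrac{k}{2})$) arises from a weight whose associated tilting module $T_q(\mu)$ is \emph{not} simple and hence decomposes, forcing the characters to combine. Similarly, in the $3\nmid\ell$, $k\equiv1\bmod 3$ case the extra generator $\chi(0,\tfrac{k+2}{3})$ comes from a minimal point on the $2a+3b=k+2$ line.

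I would therefore organize the proof as four parallel case analyses, in each of which the work splits into (i) enumerating the minimal weights via the combinatorics of $\leq'$ on the relevant two lines, and (ii) replacing $[T_q(\mu)]$ by an honest character expression. Step (ii) is where the summands $\chi(\mu)+\chi(\mu-\omega_1)$ enter: when $\mu$ with $a\neq 0$ lies on the outer line, $T_q(\mu)$ is negligible but not simple, and a short computation with the Weyl/Jacobi--Trudi character formula in rank two (using the explicit root data $\check\alpha_0=2\check\alpha_1+3\check\alpha_2$, $\check\beta_0=\check\alpha_1+2\check\alpha_2$, $\langle\rho,\check\beta_0\rangle=3$) expresses its character as the stated two-term sum. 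The main obstacle I anticipate is precisely this translation in step (ii): determining the $\nabla_q$-filtration multiplicities of the indecomposable tilting modules $T_q(\mu)$ lying just outside the alcove, equivalently expanding $[T_q(\mu)]$ in the character basis. For the weights on the $=k+1$ line the identification $[T_q]=\chi$ is immediate from \eqref{eq7}, but for the genuinely non-simple tilting modules appearing in the even and $k\equiv1$ cases one must pin down the correct linear combination of two characters, and checking that these combinations, together with the first batch, generate exactly $I_\ell$ (no more, no fewer) is the delicate bookkeeping that the $G_2$ geometry forces upon us.
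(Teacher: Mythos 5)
Your starting point is right: Proposition \ref{generators} plus \eqref{eq7}, enumeration of the minimal elements of $X^+\setminus\mathcal{A}_\ell$ on the lines $\langle\lambda,\check\beta_0\rangle=k+1,k+2$ (resp.\ $\langle\lambda,\check\alpha_0\rangle=k+1,k+2$), and the observation that weights in the second alcove give non-simple tilting modules whose classes are two-term sums of characters. This matches the paper for the two cases with $3\nmid\ell$, where the theorem's generating set really is the full list of minimal elements and the only work is the linkage-principle identity $[T_q(\mu)]=[\Delta_q(\mu)]+[\Delta_q(\mu-\omega_1)]$ for $\mu\in\Lambda'$ (the paper gets this from the linkage principle, not from a Jacobi--Trudi computation, but that is a cosmetic difference).

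There is, however, a genuine gap in the two cases with $3\mid\ell$. You write as if the three listed characters were themselves the complete output of Proposition \ref{generators}, with the only issue being whether the line $a+2b=k+2$ contributes anything extra. That is not the situation: for $k$ odd every lattice point $(a,b)$ with $a+2b=k+1$, i.e.\ all of $(k+1,0),(k-1,1),\dots,(0,\tfrac{k+1}{2})$ --- roughly $k/2$ weights --- is minimal for $\leq'$, so Proposition \ref{generators} hands you that many generators, and the content of the theorem is that the \emph{first three} of them, $\chi(0,\tfrac{k+1}{2})$, $\chi(2,\tfrac{k-1}{2})$, $\chi(4,\tfrac{k-3}{2})$, already generate the ideal containing all the others (similarly for $k$ even, where the line contributes $(1,\tfrac{k}{2}),(3,\tfrac{k}{2}-1),\dots,(k+1,0)$ plus the extra weight $(0,\tfrac{k+2}{2})$). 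Your proposal contains no mechanism for this reduction. The paper's proof of it is the bulk of the argument: setting $g_r=[\Delta(2r,\tfrac{k+1}{2}-r)]$ and introducing the auxiliary tilting classes $t_r$, $s_r$ on neighbouring lines (whose $\Delta$-expansions come from the linkage principle), one computes the products $g_rL_1$, $g_rL_2$, $t_rL_1$ via \eqref{eq5} (or the $G_2$ Littlewood--Richardson rule) and obtains three-term-style recursions \eqref{Aodd}--\eqref{Codd} from which one deduces inductively that every $g_a$, $t_b$, $s_b$ lies in the ideal $J$ generated by the three listed elements. Without this inductive scheme --- or some substitute for it --- you have only proved that $I_\ell$ is generated by the full set of $\chi(a,b)$ with $a+2b=k+1$ (plus the extra even-$k$ generator), not the three-element presentation the theorem asserts.
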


\begin{proof}
Abbreviate $L_i=[L(\omega_i)]$ for $i=1,2$ and let $J$ denote the ideal generated by the proposed generators of $I_\ell$.
Assume $3|\ell$. Let first $k$ be odd. The minimal elements in the sense of Proposition \eqref{generators} are the tilting modules of highest weights $(a,b)$ where $a+2b=k+1$. All weights belong to the upper closure of $\mathcal{A}_\ell$ so that $T((a,b))=\Delta((a,b))=L((a,b))$. Hence it is enough to show that $g_j:=[\Delta(2j,\frac{k+1}{2}-j)]\in J$ for $0\leq j\leq k$. From the linkage principle we obtain the classes of tilting modules $t_i:=[T(2i+1,\frac{k+1}{2}-i)]=[\Delta(2i+1,\frac{k+1}{2}-i)]+[\Delta(2i+1,\frac{k+1}{2}-i-1)]$ for $0\leq i\leq k$ and $s_i=[T(2i,\frac{k+3}{2}-i)]=[\Delta(2i,\frac{k+3}{2}-i)]+[\Delta(2i,\frac{k+3}{2}-i-2)]$ for $0\leq j\leq k+1$. Set $t_i=g_i=s_i=0$ if $i<0$. Using \eqref{eq5} or alternatively the type $G_2$ Littlewood-Richardson rule, \cite{Littelmann}, we obtain
\begin{equation}
\label{Aodd}
g_r L_1=
\begin{cases}
t_0+g_1,&\text{if $r=0$},\\
g_{r-1}+t_{r-1}+g_r+t_r+g_{r+1},&\text{if $r>0$};
\end{cases}
\end{equation}
\begin{equation}
g_rL_2=
\label{Bodd}
\begin{cases}
s_0+g_0+g_1+t_1,&\text{if $r=0$},\\
t_{r-2}+g_{r-1}+t_{r-1}+s_r+2g_{r}+t_{r}+g_{r+1}+t_{r+1},&\text{if $r>0$};
\end{cases}\\
\end{equation}
\begin{equation}
\label{Codd}
t_rL_1=
t_{r-1}+s_{r}+2g_r+t_r+s_{r+1}+2g_{r+1}+t_{r+1},\quad \text{ for $r\geq 0$}.
\end{equation}
Using \eqref{Aodd} twice \eqref{Bodd}, \eqref{Codd}, \eqref{Bodd}, \eqref{Codd} we obtain $t_0, t_1, s_0, s_1, t_2, s_2 \in J$. Repeatedly applying   \eqref{Aodd}, \eqref{Bodd}, \eqref{Codd} we obtain that $g_a, t_b, s_b\in J$ for $a\geq 3$, $b\geq 1$ and the first case of the theorem follows.

For even $k$ we have an additional minimal generator in the sense of Proposition \eqref{generators}, namely the class of $T(\la)$ where $\la=(0,\frac{k+2}{2})$ which belongs to the second alcove $s_{\alpha_0,1}\mathcal{A}_\ell$. Hence $[T(\la)]=[\Delta(\la)]+[\Delta(\la-\omega_2)]$. Set  $g_0=s_0=0$ and otherwise $g_j:=[\Delta(2j-1,\frac{k}{2}-j+1)]$, $t_j:=[T(2i,\frac{k}{2}-j+1)]=[\Delta(2j,\frac{k}{2}-j+1)]+[\Delta(2j,\frac{k}{2}-j)]$ and $s_j=[T(2i-1,\frac{k}{2}-i+2)]=[\Delta(2i,\frac{k}{2}-i+1)]+[\Delta(2i,\frac{k}{2}-i-1)]$ for all  $0\leq j\leq \frac{k}{2}+1$. It is enough to show that $g_j\in J$ for $0\leq j\leq \frac{k}{2}+1$. Formula \eqref{eq5} or alternatively the Littlewood-Richardson rules give us
\begin{equation}
\label{A}
g_rL_1=g_{r-1}+t_{r-1}+g_r+t_r+g_{r+1},\quad \text{ for $r\geq 0$};
\end{equation}
\begin{equation}
g_rL_2=
\label{B}
\begin{cases}
s_1+2g_1+t_1+g_2+t_2,&\text{if $r=1$},\\
t_{r-2}+g_{r-1}+t_{r-1}+2g_r+s_r+t_r+g_{r+1}+t_{r+1},&\text{if $r>1$;}
\end{cases}\\
\end{equation}
\begin{equation}
t_rL_1=
\label{C}
\begin{cases}
s_1+2g_1+t_1,&\text{if $r=0$},\\
t_{r-1}+s_{r}+2g_r+t_r+s_{r+1}+2g_{r+1}+t_{r+1},&\text{if $r>0$}.
\end{cases}
\end{equation}
Using \eqref{A}, \eqref{C}, \eqref{B} repeatedly we obtain
\begin{equation}
t_1,s_1,t_2,g_3,s_2,t_3, \ldots g_a,s_{a-1},t_a\ldots \in J=\langle t_0,g_1,g_2\rangle.
\end{equation}
Hence all $g$'s are in $J$ and the second case of the theorem follows.

Assume now that $3$ does not divide $\ell$. The minimal elements in the sense of Proposition \eqref{generators} are the tilting modules of highest weight $(a,b)$ where $(a,b)\in\Lambda\cup\Lambda'$  and additionally $(a,b)=(0,\frac{k+2}{2})$ if $k=1 \mod 3$. The weights from $\Lambda$ belong to the upper closure of ${A}_\ell$ and hence we have $T(\la)=\Delta(\la)=L(\la)$ for $\la\in\Lambda$. The weights from $\Lambda'$ belong to the second alcove $s_{\alpha_0,1}(\mathcal{A}_\ell)$ and hence we have $[T(\mu)]=[\Delta(\mu)]+[\Delta(\mu-\omega_1)]$ for $\mu\in\Lambda'$ by the linkage principle. Since  $(0,\frac{k+2}{2})$ is minimal in $X^+$ with respect to the strong linkage principle we must have $T(0,\frac{k+2}{3})=\Delta(0,\frac{k+2}{3})=L(0,\frac{k+2}{3})$. Hence the theorem follows from Proposition \ref{generators}.
\end{proof}

\excise{\section{Example $D_4$}
$m_1+2m_2+m_3+m_4=k+1$. Take $k=1$.
The weights in the fundamental alcove in $\omega$ (LHS) and $\epsilon$
(RHS) basis
\begin{eqnarray*}
(2,0,0,0)&& (2,0,0,0)\\
(0,1,0,0)& &(1,1,0,0)\\
(0,0,2,0)& &(1,1,1,-1)\\
(0,0,0,2)& &(1,1,1,1)\\
(1,0,1,0)& &(\frac{3}{2},\frac{1}{2},\frac{1}{2},-\frac{1}{2})\\
(1,0,0,1)& &(\frac{3}{2},\frac{1}{2},\frac{1}{2},\frac{1}{2})\\
(0,0,1,1)& &(1,1,1,0)
\end{eqnarray*}
Shrawan's generators:
$\chi(\la)$ for
$$\la\in\{2\omega_1, 3\omega_1, 4\omega_1, 5\omega_1, 6\omega_1,
\omega_1+\omega_3, \omega_1+\omega_4\}$$
our generators: $T({\la})$ for
$$\la\in\{2\omega_1, \omega_2,2\omega_3,2\omega_4,\omega_1+\omega_3,
\omega_1+\omega_4, \omega_3+\omega_4\}$$
}

\end{document}